\numberwithin{equation}{section}
\newtheorem{theorem}[subsection]{Theorem}
\newtheorem{prop}[subsection]{Proposition}
\theoremstyle{definition}
\newtheorem{remark}[subsection]{Remark}
\title{Elliptic genera of Berglund-H\"ubsch models }
\author{Minxian Zhu}
\address{Department of Mathematics, 110 Frenlinghuysen Rd, Rutgers University, Piscataway, NJ 08854}
\email{minxian@math.rutgers.edu}
\begin{document} 
\maketitle

\begin{abstract}
We match the elliptic genus of 
a Berglund-H\"ubsch model
with the supertrace of $y^{J[0]}q^{L[0]}$ 
on a vertex algebra $V_{{\bf 1}, {\bf 1}}$. 
We show that it
is a weak Jacobi form and 
the elliptic genus of 
one theory is equal to (up to a sign)
the elliptic genus of its mirror. 
\end{abstract}

\section{Introduction}

Classical mirror symmetry observes the interchange of $h^{1, 1}$ and $h^{1, 2}$
for a pair of Calabi-Yau three-folds. 
Among the millions of examples constructed by the physicists, 
there is one simple yet elegant construction 
by Berglund and H\"ubsch ([BH]).


Berglund and H\"ubsch considered 
a non-degenerate polynomial potential $W$
which defines a hypersurface $X_W$
in a weighted projective space. 
The polynomial $W$ is assumed to 
have as many monomials as the number of variables. 
Transposing the exponents matrix
one obtains the dual potential $W^\vee$
which defines a hypersurface $X_{W^\vee}$
in another weighted projective space. 
Berglund and H\"ubsch described 
the mirror of $X_W$ to be 
the quotient $X_{W^\vee}/H$ 
for some finite abelian group $H$. 
In general, 
$X_W/G$ and $X_{W^\vee}/G^\vee$
are expected to form a mirror pair 
for a subgroup $G \subset \text{Aut}(W)$ 
and its dual group $G^\vee \subset \text{Aut}(W^\vee)$
defined in the appropriate sense. 
It was recently proved in [CR] that 
$X_{W^\vee}/G^\vee$ 
is indeed the mirror of $X_W/G$ 
in the classical sense. 

The elliptic genera of Berglund-H\"ubsch models 
were computed in [BHe]
where it was observed to satisfy the expected duality. 
In this paper,  
we identify the elliptic genus of $X_W/G$
with the supertrace of 
some operator on a vertex algebra, 
and then prove that the elliptic genera of mirror models are equal. 
Our approach relies on the vertex algebra approach 
to mirror symmetry developed by L. Borisov [B1]. 

One of the best understood settings of mirror symmetry is 
Batyrev's construction of Calabi-Yau hypersurfaces in toric varieties [B]
and later generalized by Batyrev and Borisov 
to complete intersections in Gorenstein toric varieties [BB]. 
In the toric case, 
the mirror symmetry is interpreted as polar duality. 
In [B1], 
a vertex algebra $V_{f, g}$ was constructed
from the combinatorial data of dual polytopes $\Delta, \Delta^\vee$ and 
generic choices of coefficients $f, g$
for the lattice points in $\Delta, \Delta^\vee$. 
In particular, 
the elliptic genus of a Calabi-Yau hypersurface in a toric variety 
can be formulated as the supertrace of
some operator on the vertex algebra $V_{f, g}$, 
which made it possible
to prove that the elliptic genera of 
a Calabi-Yau hypersurface in a toric variety and its mirror 
coincide up to a sign ([BL]). 

Recently, 
a vertex algebra approach to Berglund-H\"ubsch mirror symmetry
was introduced in [B2]. 
Similar to the toric case, 
a vertex algebra $V_{{\bf 1}, {\bf 1}}$ was constructed from
a non-degenerate polynomial potential $W$ 
which contains the A and B rings of the theory as subspaces. 
Borisov was then able to prove that the A ring of 
a Berglund-H\"ubsch potential
is isomorphic to the B ring of the dual potential
with the appropriate choices of orbifoldizations. 
In Section 4, 
we identify the elliptic genus of $X_W/G$
with the supertrace of certain operator on the
vertex algebra $V_{{\bf 1}, {\bf 1}}$. 

The paper is organized as follows:
Section 2 recalls the combinatorial data of the 
Berglund-H\"ubsch construction. 
The vertex algebra $V_{{\bf 1}, {\bf 1}}$ 
is defined in Section 3. 
The comparison of the elliptic genus of $W/G$ with the double-graded
superdimension of $V_{{\bf 1}, {\bf 1}}$, 
and the comparison of elliptic genera of a mirror pair 
are done in Section 4.


I am deeply indebted to Lev Borisov for introducing me to 
the vertex algebra approach to mirror symmetry, 
and answering my endless questions.

\section{The Berglund-H\"ubsch mirror symmetry construction}

We will use notations from both [B2] and [CR].
Consider a non-degenerate polynomial potential 
\begin{eqnarray}
W(x_1, \cdots, x_d) & = & 
\sum_{i=1}^d \prod_{j=1}^d x_j^{a_{ij}}
\end{eqnarray}
with invertible exponent matrix $A = (a_{ij})$. 
The variables $x_j$ can be assigned positive
rational degrees $q_j$ 
which makes $W$ homogeneous of degree $1$, 
i.e. 
\begin{eqnarray}
\sum_j a_{ij} q_j & = & 1
\end{eqnarray}
for all $i$. 
Non-degeneracy means that 
the hypersurface $W = 0$ in $\mathbb C^d$ 
is smooth away from the origin. 
This is a very restrictive condition; 
in fact Kreuzer and Skarke classified all
non-degenerate potentials [KS]. 
They are sums of decoupled invertible potentials of 
the following types
\begin{eqnarray}
W_{\text{Fermat}} & = & x^a \\
W_{\text{loop}} & = & 
x_1^{a_1} x_2 + x_2^{a_2} x_3 + \cdots + x_{n-1}^{a_{n-1}}x_n + x_n^{a_n} x_1 \\
W_{\text{chain}} & = & 
x_1^{a_1} x_2 + x_2^{a_2} x_3 + \cdots + x_{n-1}^{a_{n-1}}x_n + x_n^{a_n}.
\end{eqnarray}
Decoupled means that 
the set of variables 
$\{x_1, \cdots, x_d\}$
is partitioned into a disjoint union of subsets,
and the variables in each subset
contribute a polynomial of one of the above types. 

Consider the group $\text{Aut} (W)$ 
of diagonal automorphisms
\begin{eqnarray}
\gamma:  x_j  & \mapsto & \gamma_j x_j
\end{eqnarray}
that preserve the potential $W$, 
that is 
\begin{eqnarray}
\text{Aut} (W) & = & 
\{ \gamma = (\gamma_j): \prod_j \gamma_j^{a_{ij}} =1 \text{  for all } i\}
\end{eqnarray}
Since the matrix $A$ is invertible, 
each $\gamma_j$ is a root of unity. 
If we write 
\begin{eqnarray}
\gamma_j & = &  \text{exp} (2 \pi i p_j)
\end{eqnarray}
for some rational number $p_j$ 
(determined up to an integer), 
then the defining relation of $\text{Aut} (W)$
translates to 
\begin{eqnarray}
\sum_{j=1}^d  a_{ij} p_j & \in &  \mathbb Z.
\end{eqnarray}
This identifies the group 
$\text{Aut} (W)$ 
with $d$-tuple of rational numbers 
$p = (p_j)$ defined up to $\mathbb Z^d$
such that $A p \in \mathbb Z^d$. 
Let $\rho_i$ be the $i$-th column of $A^{-1}$, 
then $A \rho_i = e_i$ 
where $e_i$ is the $i$-th standard vector. 
The group $\text{Aut} (W)$ 
is generated by the $\rho_i$-s, 
and we have 
$q = \sum_i \rho_i$
where $q = (q_j)$ is the vector 
encoding the rational degrees of $x_j$. 
The corresponding scaling operator 
\begin{eqnarray}
J_W: x_j & \mapsto & 
\text{exp}(2\pi i q_j) x_j
\end{eqnarray}
is called the exponential grading operator. 
Other than the subgroup of $\text{Aut} (W)$ 
generated by $J_W$, 
we are also interested in the subgroup 
$SL_W = SL_d \cap \text{Aut} (W)$ 
defined as follows:
\begin{eqnarray}
SL_W & = & 
\{ \gamma \in \text{Aut} (W): \prod_j \gamma_j = 1 \}. 
\end{eqnarray}
This corresponds to the condition that 
$\sum_j p_j \in \mathbb Z$ 
for $p = (p_j) \in \text{Aut} (W)$. 

We impose the condition that
the subgroup $\langle J_W \rangle$
generated by the exponential grading operator
lies in $SL_W$. 
This translates to the (generalized) Calabi-Yau condition [B2]:
\begin{eqnarray}
\sum_{j=1}^d q_j & = & 
k \in \mathbb Z_{>0}. 
\end{eqnarray}
Let $G$ be a subgroup of $\text{Aut} (W)$ 
that contains $J_W$ and is contained in $SL_W$, 
that is $\langle J_W \rangle \subset G \subset SL_W$. 
The group $G$ acts on the hypersurface 
$X_W = \{W=0 \}$
in a weighted projective space
by scaling each coordinate. 
To describe the mirror 
of the quotient $X_W/G$, 
we need the dual potential and dual group. 

The dual potential $W^\vee$ is obtained by transposing 
the exponent matrix $A$, 
i.e. 
\begin{eqnarray}
W^\vee & = & 
\sum_{i=1}^d \prod_{j=1}^d x_j^{a_{ij}}
\end{eqnarray}
It follows from the classification of [KS]
that if $W$ is non-degenerate, 
then $W^\vee$ is also non-degenerate. 
In fact, 
one observes that 
the dual potential of each type 
in (2.3)-(2.5)
is a potential of the same type. 
We also consider 
the group $\text{Aut} (W^\vee)$ 
of diagonal automorphisms 
that preserves $W^\vee$. 
If identified with (row) vectors $\bar p$ in 
$\mathbb Q^d / \mathbb Z^d$
such that $\bar p A \in \mathbb Z^d$, 
$\text{Aut} (W^\vee)$ is generated 
by the rows of $A^{-1}$. 
Similarly, we define 
the dual exponential grading operator
$J_{W^\vee}$
and the subgroup $SL_{W^\vee}$. 
Given $G$ such that 
$\langle J_W \rangle \subset G \subset SL_W$, 
there is a natural way of defining a dual group $G^\vee$
such that 
$\langle J_{W^\vee} \rangle \subset G^\vee \subset SL_{W^\vee}$
([K]). 
We will describe this duality in the language of dual lattices [B2]. 

Let $M_0$ and $N_0$ be free abelian groups 
with bases $\{ u_i \}, i =1, \cdots, d$
and $\{v_j\}, j =1, \cdots, d$. 
Define a non-degenerate integral pairing 
on these lattices by putting 
\begin{eqnarray}
u_i \cdot v_j & =  & a_{ij}, 
\end{eqnarray}
where $a_{ij}$ are the exponents in the polynomial potential $W$. 
Because the pairing is integral, 
we have 
$$
M_0 \subset N_0^\vee, \qquad
N_0 \subset M_0^\vee
$$
where $N_0^\vee$ and $M_0^\vee$ are 
the dual lattices of $N_0$ and $M_0$. 
It was shown in [B2] that 
$\text{Aut} (W)$ is naturally isomorphic to 
$M_0^\vee/ N_0$. 
Indeed, 
given $(p_j) \in \mathbb Q^d$, 
form $v = \sum_j p_j v_j$, 
then (2.9) is equivalent to 
$u_i \cdot v \in \mathbb Z$
for all $i$
which implies $v \in M_0^\vee$. 
Moreover, 
integer-valued $(p_j)$
corresponds to $v \in N_0$. 
The image of $J_W$
under this isomorphism 
can be represented by 
\begin{eqnarray}
\text{deg}^\vee & = & 
\sum_{j=1}^d q_j v_j \in M_0^\vee, 
\end{eqnarray}
then
\begin{eqnarray}
u_i \cdot \text{deg}^\vee & = & 1
\end{eqnarray}
for all $i$. 
Similarly, 
the group $\text{Aut} (W^\vee)$ is 
naturally isomorphic to 
$N_0^\vee/M_0$, 
and $J_{W^\vee}$ is represented by
$\text{deg} \in N_0^\vee$
such that 
\begin{eqnarray}
\text{deg} \cdot v_j & = & 1
\end{eqnarray}
for all $j$. 

Each subgroup $G \subset \text{Aut} (W)$
determines a suplattice $N \supset N_0$
such that $G \cong N/N_0$. 
The dual group $G^\vee \subset \text{Aut} (W^\vee)$
is defined to be 
$M/M_0$
where $M$ is the dual lattice of $N$. 
In particular, 
the dual of $\langle J_W \rangle$ is $SL_{W^\vee}$; 
the dual of $SL_W$ is $\langle J_{W^\vee} \rangle$. 
Indeed, 
for the entries of $(p_j) \in \text{Aut} (W)$ 
to add up to an integer
(the condition of $(p_j)$ being in $SL_W$) 
is equivalent to 
$(\text{deg} \cdot \sum_j p_j v_j)$
being an integer. 
It is now clear that 
if $G$ sits between $\langle J_W \rangle$ 
and $SL_W$, 
then $G^\vee$ sits between 
$\langle J_{W^\vee} \rangle$ and 
$SL_{W^\vee}$; 
the corresponding lattices then satisfy 
$\text{deg} \in M$ and 
$\text{deg}^\vee \in N$. 

The dual potential $W^\vee = 0$ defines 
a hypersurface $X_{W^\vee}$ in another 
weighted projective space. 
Berglund-H\"ubsch mirror symmetry asserts 
that $X_W/G$ and $X_{W^\vee}/G^\vee$
are mirror of each other 
for $\langle J_W \rangle \subset G \subset SL_W$.

\section{The vertex algebra $V_{{\bf 1}, {\bf 1}}$}

In this section, 
we define a vertex algebra 
associated to the above combinatorial data, 
and state some results about it. 
We mostly follow the exposition of [B1, B2]. 

Fix dual lattices $M$ and $N$
such that 
$M_0 \subset M \subset N_0^\vee$, 
$N_0 \subset N \subset M_0^\vee$, 
$\text{deg} \in M$, 
and $\text{deg}^\vee \in N$. 
First, 
we define a vertex superalgebra 
$\text{Fock}_{M \oplus N}$
which is the tensor product of the lattice vertex algebra associated to $M \oplus N$
and a vertex superalgebra generated by $2d$ fermions. 
Let 
\begin{eqnarray}
L & = & M \oplus N.
\nonumber 
\end{eqnarray}
The non-degenerate pairing between $M$ and $N$ 
extends to a non-degenerate bilinear form on $L$
where the only non-zero pairing is between an element 
of $M$ and an element of $N$. 
$L$ is thus an even lattice, 
though not positive-definite. 
Consider the 2-cocycle
\begin{eqnarray} \label{coc}
& c: L \times L \to \{ \pm 1 \} &
\nonumber
\end{eqnarray}
defined by 
\begin{eqnarray}
c((m, n), (m_1, n_1))  & = & (-1)^{m \cdot n_1}. 
\end{eqnarray}
Let $V_L$ be the lattice vertex algebra associated to $L$ and this cocycle (see e.g. [HL]). 
We use $A, B$ to distinguish modes coming from elements of $N$ and $M$. 
That is, we denote
\begin{eqnarray}
m \cdot B (z) = \sum_{k \in \mathbb Z} m \cdot B[k] z^{-k-1}, & & 
n \cdot A(z) = \sum_{k \in \mathbb Z} n \cdot A[k] z^{-k-1}
\end{eqnarray}
which have OPE: 
\begin{eqnarray} \label{ope}
m \cdot B(z) \, m_1 \cdot B(w) \sim n \cdot A(z) \, n_1 \cdot A(w) \sim 0, & &
m \cdot B(z) \, n \cdot A(w) \sim \frac{m \cdot n}{(z-w)^2}. 
\end{eqnarray}
As a vector space, 
$V_L$ is isomorphic to the direct sum of infinitely many polynomial algebras
indexed by elements of $M$ and $N$ 
in infinitely many variables 
\begin{eqnarray} 
& \oplus_{m \in M, n \in N} \mathbb C[ B[-1], B[-2], \cdots, A[-1], A[-2], \cdots] \, |m, n\rangle &
\nonumber
\end{eqnarray}
Here, 
each $B[-k], k \geq 1$ does not stand for one mode, but $d$ linearly independent ones 
corresponding to a basis of $M$; 
the same is true for $A[-k]$. 
Each $|m, n \rangle$ is annihilated by $m_1 \cdot B[k]$, $n_1 \cdot A[k]$ for $k>0$ and 
\begin{eqnarray}
m_1 \cdot B[0]  \, |m, n\rangle =
m_1 \cdot n \, |m, n \rangle, & &
n_1 \cdot A[0] \, |m, n \rangle = 
n_1 \cdot m  |m, n \rangle. 
\end{eqnarray}
We denote the vertex operators of $|m, n \rangle$ by 
$e^{\int m \cdot B  (z) + n \cdot A (z)}$. 
By definition, 
it acts on an arbitrary element of $V_L$ as follows
\begin{eqnarray} 
e^{\int m \cdot B  (z) + n \cdot A (z)} 
\prod A[\cdots] \prod B[\cdots]
|m_1, n_1 \rangle = 
(-1)^{m \cdot n_1} \text{exp}( \sum_{k <0} (m \cdot B[k] + n \cdot A[k]) \frac{z^{-k}}{-k})
& & \nonumber \\
\text{exp}( \sum_{k >0} (m \cdot B[k] + n \cdot A[k]) \frac{z^{-k}}{-k}) 
z^{m\cdot n_1+ n \cdot m_1} 
\prod A[\cdots] \prod B[\cdots] 
|m+m_1, n+n_1\rangle. & &
\nonumber
\end{eqnarray}
Let $\Lambda_L$ be the vertex super-algebra generated by the fermionic fields:
\begin{eqnarray} 
m \cdot \Phi(z) = \sum_{k \in \mathbb Z} m \cdot \Phi [k] z^{-k -1}, 
\quad & & \quad
n \cdot \Psi(z) = \sum_{k \in \mathbb Z} n \cdot \Psi [k] z^{-k} 
\nonumber
\end{eqnarray}
with OPE:
\begin{eqnarray} 
m \cdot \Phi(z) \,\, n \cdot \Psi(w) & \sim & 
\frac{ m \cdot n} {z-w}. 
\nonumber
\end{eqnarray}
As a vector space, 
$\Lambda_L$ is isomorphic to the exterior algebra
\begin{eqnarray} 
\wedge^\cdot( \oplus_{k < 0} \Phi[k] \oplus \oplus_{k \leq 0} \Psi[k]). 
\nonumber
\end{eqnarray}
Define a vertex super-algebra
\begin{eqnarray} 
\text{Fock}_{M \oplus N} & = & V_L \otimes \Lambda_L.
\nonumber
\end{eqnarray}
Consider the following two bosonic fields in 
$\text{Fock}_{M \oplus N} $
with normal ordering implicit:
\begin{eqnarray} 
J(z) & = & - \sum_{i =1}^d m_i \cdot \Phi (z) \, n_i \cdot \Psi (z) - \text{deg} \cdot B(z) +
\text{deg}^\vee \cdot A(z),
\label{jfield}
\\
L(z) & = & \sum_{i =1}^d m_i \cdot B(z) \, n_i \cdot A(z)  - \sum_{i=1}^d m_i \cdot \Phi(z) \partial_z \, n_i \cdot \Psi(z) - \partial_z \text{deg} \cdot B(z),
\label{lfield}
\end{eqnarray}
where $\{ m_i \}$, $\{n_i\}$ are dual bases in $M$ and $N$. 
Write 
\begin{eqnarray} 
J(z) = \sum_{k \in \mathbb Z} J[k] z^{-k-1}, \quad & & \quad
L(z) = \sum_{k \in \mathbb Z} L[k] z^{-k-2}.
\nonumber
\end{eqnarray}
The eigenvalues of $J[0]$ and $L[0]$ 
equip $\text{Fock}_{M \oplus N}$  with a double grading. 
Explicitly, 
given an element 
\begin{eqnarray} 
\prod A[\cdots] \prod B[\cdots] \prod \Phi[\cdots] \prod \Psi[\cdots] |m, n\rangle
& \in & \text{Fock}_{M \oplus N}, 
\nonumber
\end{eqnarray}
$J[0]$ counts the number of occurrences of $\Psi$ 
minus the number of occurrences of $\Phi$, 
plus $(\text{deg}^\vee \cdot m -\text{deg} \cdot n )$, 
while $L[0]$ counts the opposite of the sum of indices in $[ \,\, ]$, 
plus $m \cdot n + \text{deg} \cdot n$.

Denote by $\triangle$ the set $(u_i)$ and by $\triangle^\vee$ the set $(v_j)$. 
Define the cones $K_M$ in $M$ and $K_N$ in $N$ by 
\begin{eqnarray} 
K_M : = M \cap \sum_i \mathbb Q_{\geq 0} u_i \quad & & \quad
K_N : = N \cap \sum_j \mathbb Q_{\geq 0} v_j.
\nonumber
\end{eqnarray}
Since $u_i \cdot \text{deg}^\vee = 1$ for all $i$
and $\text{deg}^\vee \in N$, 
$(u_i)$ are the primitive generators of the rays in $K_M$, 
so are $(v_j)$ primitive generators of the rays in $K_N$. 
Consider the following operators
\begin{eqnarray} 
D_{1, 0} & = & 
\text{Res}_{z=0} \sum_{m \in \triangle} m \cdot \Phi (z) e^{\int m \cdot B(z)}, 
\nonumber \\
D_{0, 1} & = &
\text{Res}_{z=0} \sum_{n \in \triangle^\vee} n \cdot \Psi(z) e^{\int n \cdot A(z)}, 
\nonumber \\
D_{1, 1} & = & D_{1, 0} + D_{0, 1}.
\nonumber 
\end{eqnarray}
They all commute with $J[0]$ and $L[0]$. 
It is direct to check that 
$D_{1, 0}$ and $D_{0, 1}$ are both differentials, 
and they anticommute,
hence $D_{1, 1}$ is also a differential.
Introduce a bi-grading on $\text{Fock}_{M \oplus N}$
by the eigenvalues of 
\begin{eqnarray} 
\text{deg}^\vee \cdot A[0] & \quad \text{and} \quad & 
\text{deg} \cdot B[0]. 
\nonumber
\end{eqnarray}
Then $D_{1, 0}$ and $D_{0, 1}$ 
change the $(\text{deg}^\vee \cdot A[0], \text{deg} \cdot B[0])$-grading
by $(1, 0)$ and $(0, 1)$ respectively,
hence $(\text{Fock}_{M \oplus N}, D_{1, 0}, D_{0, 1})$ 
form a double complex. 

Define the vertex super-algebra $V_{1, 1}$ as the cohomology of 
$\text{Fock}_{M \oplus N}$ with respect to the total differential $D_{1, 1}$. 
The $(J[0], L[0])$-grading on Fock descends to the cohomology of 
the operators $D_{1, 0}$, $D_{0, 1}$, and $D_{1, 1}$.

We need the following results from [B1], [B2]. 

\begin{prop}{[B2, Theorem 5.2.3]}
The cohomology of $\text{Fock}_{M \oplus N}$ 
with respect to $D_{1, 1}$ is equal to the cohomology of 
$\text{Fock}_{M \oplus K_N}$ with respect to $D_{1, 1}$. 
\end{prop}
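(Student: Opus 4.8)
The plan is to exhibit $\text{Fock}_{M \oplus K_N}$ as a subcomplex of $\text{Fock}_{M \oplus N}$ and to prove that the inclusion is a quasi-isomorphism. The differential $D_{1,0}$ leaves the $N$-label $n$ of each generator $|m, n\rangle$ unchanged, while $D_{0,1}$ shifts it by one of the ray generators $v_j$; since $K_N$ is by definition closed under adding the $v_j$, the span of the states with $n \in K_N$ is stable under $D_{1,1} = D_{1,0} + D_{0,1}$. It therefore suffices to show that the quotient complex $Q$, spanned by the states with $n \notin K_N$ and carrying the induced differential, is acyclic.

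To prove $H(Q, D_{1,1}) = 0$ I would use the double-complex structure furnished by the bigrading $(\text{deg}^\vee \cdot A[0], \text{deg} \cdot B[0])$, for which $D_{1,0}$ and $D_{0,1}$ are the two anticommuting differentials, and run the spectral sequence that computes $D_{0,1}$-cohomology first. Because $D_{0,1}$ involves only the modes $A[k]$, $\Psi[k]$ and the labels $v_j \in N$, it preserves the $M$-label $m$ together with the entire $B$- and $\Phi$-content of a state; hence $(Q, D_{0,1})$ splits as a direct sum, indexed by $m$ and by the monomials in the $B[k]$ and $\Phi[k]$, of complexes built only from the $A$-oscillators, the $\Psi$-fermions, and the labels $n \notin K_N$. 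So the problem reduces to showing that each of these $(A, \Psi, N)$-complexes, truncated to $n \notin K_N$, is $D_{0,1}$-acyclic; the factor $z^{v_j \cdot m}$ produced by $e^{\int v_j \cdot A(z)}$ twists this differential by $m$, but only in a way that does not affect the vanishing.

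This last acyclicity is the heart of the matter and the step I expect to be the main obstacle: the $(A, \Psi, N)$-complex is a vertex-algebra incarnation of the Koszul--\v{C}ech complex of the cone $\sum_j \mathbb{Q}_{\geq 0} v_j$, and one must show it is exact off $K_N$. I would filter by the $L[0]$-energy and pass to the associated graded, a finite Koszul-type complex governed by the semigroup generated by the $v_j$ (with the $m$-dependent coefficients above). For a label $n \notin K_N$, choose $m^\ast \in M \otimes \mathbb{R}$ with $m^\ast \cdot v_j \geq 0$ for all $j$ but $m^\ast \cdot n < 0$; then deleting a fermion and shifting $n \mapsto n - v_j$ keeps one outside $K_N$, since $m^\ast \cdot (n - v_j) \leq m^\ast \cdot n < 0$, so such operations are endomorphisms of the truncated complex. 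Contracting along a ray on which $m^\ast$ is strictly positive, and organizing the choice by induction over the faces of the cone, yields a contracting homotopy and kills the associated-graded cohomology. Provided the filtration is exhaustive and bounded on each $(J[0], L[0])$-eigenspace---which must be checked, and is where the finiteness hypotheses of the construction enter---the spectral sequence converges, giving $H(Q, D_{0,1}) = 0$ and hence $H(Q, D_{1,1}) = 0$; equivalently one lifts the $D_{0,1}$-homotopy to a $D_{1,1}$-homotopy by the standard perturbation series, which terminates because $D_{1,0}$ strictly raises the first grading. This gives the desired isomorphism of cohomologies.
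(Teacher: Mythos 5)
You should first note that the paper does not prove this proposition at all: it is imported verbatim from [B2, Theorem 5.2.3], so the only benchmark is the cited source. Your skeleton --- exhibit $\text{Fock}_{M\oplus K_N}$ as a subcomplex (correct, since $D_{1,0}$ fixes the $N$-label and $D_{0,1}$ shifts it by the cone generators $v_j$), reduce to acyclicity of the quotient $Q$ supported on $n\notin K_N$, and kill $Q$ using a dual-cone functional and a Koszul-type contraction --- is the standard shape of the argument. But two of your steps do not hold up as written. The claimed splitting of $(Q,D_{0,1})$ ``indexed by $m$ and by the monomials in the $B[k]$ and $\Phi[k]$'' is false: the vertex operator $e^{\int v_j\cdot A(z)}$ contains the annihilation part $\exp\bigl(\sum_{k>0}(v_j\cdot A[k])z^{-k}/(-k)\bigr)$, whose modes pair nontrivially with the $B[-l]$'s by (\ref{ope}), the field $v_j\cdot\Psi(z)$ contracts with the $\Phi$-modes, and the power $z^{v_j\cdot m}$ shifts which modes survive the residue. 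So $D_{0,1}$ preserves neither the $B$- nor the $\Phi$-content, and $Q$ does not decompose into $(A,\Psi,N)$-complexes tensored with spectators. This is repairable by introducing a further filtration whose associated graded differential is the naive lattice-shift Koszul differential (this is how results like Proposition \ref{MKNDg} are actually proved), but that repair is precisely the content you have skipped, and it adds another spectral sequence whose convergence must also be justified.

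The more serious gap is the passage from $D_{0,1}$-acyclicity to $D_{1,1}$-acyclicity of $Q$. The bigraded complex $\text{Fock}_{M\oplus N}$, and hence $Q$, is unbounded in both directions of the $(\deg^\vee\cdot A[0],\deg\cdot B[0])$-grading, and the hypothesis you defer --- boundedness of the filtration on each $(J[0],L[0])$-eigenspace --- actually fails: because the pairing $m\cdot n$ is indefinite, one can send $\deg^\vee\cdot m\to\infty$ while adjusting $\deg\cdot n$ and the oscillator energy to keep both $J[0]$ and $L[0]$ fixed and to stay outside $K_N$, so these eigenspaces are infinite-dimensional with unbounded filtration degree. (This is exactly why the finite-dimensionality of the $(J[0],L[0])$-eigenspaces of $V_{1,1}$, quoted from [B2, Theorem 6.2.1], is a theorem and not an observation about $\text{Fock}$.) Consequently neither the column-first spectral sequence nor the perturbation series $\sum_k(HD_{1,0})^kH$ is known to converge for the reason you give, namely that $D_{1,0}$ raises the first grading; unbounded double complexes are precisely the setting where row-first and column-first computations can disagree. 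Closing this convergence gap --- by a filtration that is genuinely bounded on the relevant pieces of $Q$, or by an explicit $D_{1,1}$-homotopy whose action on each vector is manifestly a finite sum --- is the real content of [B2, Theorem 5.2.3] and is absent from your proposal.
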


\begin{prop}{[B2, Theorem 6.2.1]}
For fixed eigenvalues of $J[0]$ and $L[0]$, 
the corresponding eigenspace in $V_{1, 1}$ is finite-dimensional.
\end{prop}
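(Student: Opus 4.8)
The plan is to replace $V_{1,1}$ by a model in which every $(J[0],L[0])$-eigenspace is already finite-dimensional at the chain level, and then deduce finiteness of the cohomology as a subquotient. By [B2, Theorem~5.2.3] the $D_{1,1}$-cohomology of $\text{Fock}_{M\oplus N}$ agrees with that of $\text{Fock}_{M\oplus K_N}$; applying the same argument to $D_{1,0}$ in the $M$-direction (the construction being symmetric under interchanging $M\leftrightarrow N$, $B\leftrightarrow A$, $\Phi\leftrightarrow\Psi$, $D_{1,0}\leftrightarrow D_{0,1}$) I would further identify this with the $D_{1,1}$-cohomology of $\text{Fock}_{K_M\oplus K_N}$. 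Since $D_{1,1}$ commutes with $J[0]$ and $L[0]$, each $(j,\ell)$-eigenspace of the cohomology is a subquotient of the corresponding eigenspace of the chain space, so it suffices to prove that for fixed integers $j,\ell$ the span of the basis monomials
\[
\prod A[\cdots]\,\prod B[\cdots]\,\prod \Phi[\cdots]\,\prod \Psi[\cdots]\,|m,n\rangle,\qquad m\in K_M,\ n\in K_N,
\]
with $J[0]=j$ and $L[0]=\ell$ is finite-dimensional.

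The key input is positivity on the cones. Since the exponents $a_{ij}=u_i\cdot v_j$ are non-negative, for $m\in K_M$ and $n\in K_N$ one has $m\cdot n\ge 0$; moreover $\text{deg}^\vee\cdot u_i=1$ and $\text{deg}\cdot v_j=1$ give $\text{deg}^\vee\cdot m\ge 0$ and $\text{deg}\cdot n\ge 0$, each vanishing only at the cone vertex. Recalling the grading formula from Section~3, the eigenvalue $\ell$ equals the total mode energy $E$ (minus the sum of the mode indices, hence $E\ge 0$) plus $m\cdot n+\text{deg}\cdot n$. All three summands being non-negative forces $E\le\ell$, $m\cdot n\le\ell$, and crucially $\text{deg}\cdot n\le\ell$. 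The last bound confines $n=\sum_j c_j v_j$ to the region $\{c_j\ge 0,\ \sum_j c_j\le\ell\}$, which is bounded and so contains only finitely many lattice points of $K_N$.

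To bound $m$ I would use the $J[0]$-equation $j=(\#\Psi-\#\Phi)+\text{deg}^\vee\cdot m-\text{deg}\cdot n$. The fermionic counts are controlled by $E\le\ell$: every $\Phi[k]$ and every $\Psi[k]$ with $k<0$ carries energy $\ge 1$, so there are at most $\ell$ of each, while the zero-modes $\Psi[0]$ carry no energy but, being exterior generators of the $d$-dimensional space spanned by the $\psi\cdot\Psi[0]$, number at most $d$. Hence $\#\Phi\le\ell$ and $\#\Psi\le\ell+d$, so $\text{deg}^\vee\cdot m=j-\#\Psi+\#\Phi+\text{deg}\cdot n\le j+2\ell$, while $\text{deg}^\vee\cdot m\ge 0$; as before this leaves only finitely many $m\in K_M$. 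Finally, for each of the finitely many admissible pairs $(m,n)$ the residual energy $E=\ell-m\cdot n-\text{deg}\cdot n$ is a fixed non-negative integer, and there are only finitely many bosonic-times-fermionic monomials of total energy $E$, since only modes of index $\le E$ can occur and there are finitely many of these at each level. Summing over the finitely many $(m,n)$ yields finiteness at the chain level, whence $V_{1,1}$ has finite-dimensional $(J[0],L[0])$-eigenspaces.

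The step I expect to be the main obstacle is the reduction to the doubly conical model $\text{Fock}_{K_M\oplus K_N}$: [B2, Theorem~5.2.3] is stated only in the $N$-direction, and I must justify that the symmetric argument genuinely applies to $D_{1,0}$ after the first reduction has been carried out, i.e.\ that the two cone reductions are compatible. A secondary subtlety is the accounting of the $\Psi[0]$ zero-modes, which contribute nothing to $L[0]$ and so must be bounded by the fermionic part of the $J[0]$-count rather than by the energy $E$.
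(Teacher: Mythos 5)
The paper offers no proof of this proposition to compare against: it is quoted verbatim from [B2, Theorem 6.2.1] (and the paper even remarks that the cited theorem is much stronger). Judged on its own terms, your argument has one genuine gap, which you correctly flag yourself: the reduction from $\text{Fock}_{M\oplus K_N}$ to $\text{Fock}_{K_M\oplus K_N}$. The quoted [B2, Theorem 5.2.3] and its $M\leftrightarrow N$ mirror image give quasi-isomorphisms $\text{Fock}_{M\oplus K_N}\hookrightarrow\text{Fock}_{M\oplus N}$ and $\text{Fock}_{K_M\oplus N}\hookrightarrow\text{Fock}_{M\oplus N}$, but the one you actually need, $\text{Fock}_{K_M\oplus K_N}\hookrightarrow\text{Fock}_{M\oplus K_N}$, does not follow formally from these two; it requires re-running the homotopy argument of the proof of 5.2.3 inside the subcomplex $\text{Fock}_{M\oplus K_N}$, i.e.\ material not available from the statements cited in this paper. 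The reduction cannot be dismissed as cosmetic, either: the $(J[0],L[0])$-eigenspaces of $\text{Fock}_{M\oplus K_N}$ itself really are infinite-dimensional (take $n=0$ and no modes: all $|m,0\rangle$ with $\text{deg}^\vee\cdot m=j$ lie in the $(j,0)$-eigenspace, and there are infinitely many such $m\in M$ once $d\geq 2$), so some further reduction beyond the first quoted proposition is indispensable.

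Everything else is correct. The positivity bookkeeping on $\text{Fock}_{K_M\oplus K_N}$ --- $m\cdot n\ge 0$ because $a_{ij}\ge 0$; $\text{deg}\cdot n=\sum_j d_j\ge 0$ and $\text{deg}^\vee\cdot m=\sum_i c_i\ge 0$ because $\text{deg}\cdot v_j=u_i\cdot\text{deg}^\vee=1$; the bounds $E\le\ell$, $\#\Phi\le\ell$, $\#\Psi\le\ell+d$ with the $\Psi[0]$ zero-modes capped by $d$ --- is exactly right and does yield finiteness at the chain level, and the subquotient step is unobjectionable since $D_{1,1}$ commutes with $J[0]$ and $L[0]$. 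If you want to close the gap without opening up the proof of [B2, Theorem 5.2.3], you can instead route the argument through Proposition \ref{MKNDg}: $\text{Fock}_{M\oplus K_N}/D_{0,1}=\oplus_{n\in\text{Box}(K_N)}\mathcal{VA}_{K_N,M}^{(n)}$, the Box is finite, and each summand has finite-dimensional $(J[0],L[0])$-eigenspaces because the generators $a_i,b^i,\phi^i,\psi_i$ have non-negative $L[0]$-weights and the weight-zero modes carry $J[0]$-charges $q_i$ and $1-q_i$, both strictly positive --- this is precisely what makes the product formula (4.1) a well-defined series. The bounded-below, exhaustive filtration and the resulting spectral sequence (the same device used in the proof of Theorem 4.1) then exhibit each eigenspace of $V_{1,1}$ as a subquotient of the corresponding eigenspace of $\text{Fock}_{M\oplus K_N}/D_{0,1}$, which finishes the proof using only results actually quoted in the paper.
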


Theorem 6.2.1 of [B2] is in fact much stronger. 
We are interested in computing the supertrace of the operator $y^{J[0]} q^{L[0]}$ 
on $V_{1, 1}$. 
Supertrace means that we subtract the dimension of the odd part from the dimension of the even part of the corresponding eigenspaces. 
By the previous proposition, 
this gives a well-defined double series in $y$ and $q$. 
To compute this invariant, 
we first describe the cohomology of 
$\text{Fock}_{M \oplus K_N}$ with respect to $D_{0, 1}$.


\begin{prop}{[B1, Proposition 9.3]} \label{MKNDg}
Denote by $(v_j^\vee)$ the dual basis of $(v_j)$ in $N_0^\vee$.
For each $i$, 
define 
\begin{eqnarray}
b^i (z) = e^{\int v_i^\vee \cdot B(z)}, \quad 
\phi^i (z) = (v_i^\vee \cdot \Phi(z)) e^{\int v_i^\vee \cdot B(z)}, \quad
\psi_i(z) = (v_i \cdot \Psi(z)) e^{ - \int v_i^\vee \cdot B(z)} 
\nonumber \\
a_i (z) = :(v_i \cdot A(z)) e^{- \int v_i^\vee \cdot B(z)}: 
+ : (v_i^\vee \cdot \Phi(z)) (v_i \cdot \Psi(z)) e^{- \int v_i^\vee \cdot B(z)} :
\nonumber
\end{eqnarray}
These fields generate a vertex subalgebra $\mathcal{VA}_{K_N, N_0^\vee}$
inside $\text{Fock}_{N_0^\vee \oplus 0}$. 
Consider all elements from $\mathcal{VA}_{K_N, N_0^\vee}$
whose $A[0]$ eigenvalues lie in $M$. 
Denote the resulting algebra by $\mathcal{VA}_{K_N, M}$. 

Let $Box (K_N)$ be the set of all elements $n \in K_N$
such that $n - v_j \notin K_N$ for all $j$. 
Equivalently, 
$Box (K_N) = \{ \sum_{j} p_j v_j \in K_N: 0 \leq p_j <1 \}$. 
For every $n \in Box (K_N)$, 
consider the following set of elements of Fock$_{M \oplus n}$. 
For every $v = \prod A[\cdots] \prod B[\cdots] \prod \Phi[\cdots] \prod \Psi[\cdots] 
|m, 0 \rangle$ that lies in 
$\mathcal{VA}_{K_N, M} \subset \text{Fock}_{M \oplus 0}$, 
consider $v' = \prod A[\cdots] \prod B[\cdots] \prod \Phi[\cdots] \prod \Psi[\cdots] 
|m, n \rangle$ which is obtained by applying the same modes of 
$A, B, \Phi$, and $\Psi$ to $|m, n \rangle$ 
instead of $|m, 0\rangle$. 
We denote this space by 
$\mathcal{VA}_{K_N, M}^{(n)}$. 

Then the cohomology of $\text{Fock}_{M \oplus K_N}$ with respect to 
$D_{0, 1}$ is equal to 
\begin{eqnarray} \label{dec}
\text{Fock}_{M \oplus K_N} / D_{0, 1} & = & 
\oplus_{n \in \text{Box}(K_N)}
\mathcal{VA}_{K_N, M}^{(n)}. 
\end{eqnarray}
\end{prop}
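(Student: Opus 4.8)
The plan is to trade the bare oscillators for the cone-adapted fields $a_i, b^i, \phi^i, \psi_i$ and to recognize $D_{0,1}$ in these variables as a Koszul differential whose cohomology collapses onto the box elements. I would begin by computing the operator products of the four families and checking that they close into a free-field vertex algebra: the only nontrivial contractions come from the $B$--$A$ and $\Phi$--$\Psi$ OPEs and are governed by $v_i^\vee \cdot v_j = \delta_{ij}$, so that $(b^i, a_j)$ behave as a bosonic pair and $(\phi^i, \psi_j)$ as a fermionic pair. Since each exponential $e^{\pm \int v_i^\vee \cdot B(z)}$ shifts only the $M$-direction, the algebra $\mathcal{VA}_{K_N, N_0^\vee}$ that they generate sits entirely in the $n = 0$ sector, and its restriction to $A[0]$-eigenvalues in $M$ is exactly $\mathcal{VA}_{K_N, M}$; this is the data I expect to survive to cohomology.

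Next I would use the geometry of the cone to split the complex. Because $(v_j)$ is a $\mathbb{Q}$-basis of $N_0 \otimes \mathbb{Q}$, every $n \in K_N$ has unique nonnegative coordinates $p_j$, and the assignment $n \mapsto \sum_j \{p_j\}\, v_j$ gives each charge sector a well-defined box type in $\mathrm{Box}(K_N)$. The differential $D_{0,1} = \mathrm{Res}_{z=0} \sum_j (v_j \cdot \Psi(z))\, e^{\int v_j \cdot A(z)}$ raises the $N$-charge by a generator $v_j$, hence increments one integer part $\lfloor p_j \rfloor$ while fixing all fractional parts. It therefore preserves the box type, and $(\text{Fock}_{M \oplus K_N}, D_{0,1})$ decomposes as a direct sum over $n_0 \in \mathrm{Box}(K_N)$ of subcomplexes supported on the shifted sublattice $\{ n_0 + \sum_j c_j v_j : c_j \in \mathbb{Z}_{\geq 0}\}$.

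Inside a single box sector the computation becomes Koszul. Reading the charge-raising operator $e^{\int v_j \cdot A}$ as multiplication by the $j$-th variable of a polynomial ring and the inserted fermion $v_j \cdot \Psi$ as wedging with the $j$-th exterior generator, the leading part of $D_{0,1}$ reduces to the cochain Koszul differential that sums these operations over $j$; linear independence of the $v_j$ makes these a regular sequence, so the cohomology is concentrated where every $c_j = 0$, that is, at the box element $n_0$, with the transverse oscillator content matching $\mathcal{VA}_{K_N, M}^{(n_0)}$. To promote this model to the genuine residue I would filter $\text{Fock}_{M \oplus K_N}$ (say by total $A$- and $B$-mode number) so that the associated graded differential is this Koszul differential; the spectral sequence then degenerates because its first page already lives in a single Koszul degree, and convergence is assured by the finiteness of the $(J[0], L[0])$-eigenspaces established in [B2, Theorem 6.2.1].

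The hard part will be the passage from this Koszul model to the true vertex-operator differential. The operator $D_{0,1}$ is a residue, so each exponential $e^{\int v_j \cdot A(z)}$ contributes infinitely many subleading mode corrections, the fermionic factors $v_j \cdot \Psi(z)$ interact with the surviving fermions through normal ordering, and the cocycle $(-1)^{m \cdot n_1}$ produces signs that must be tracked. The real work is to choose a filtration whose associated graded cleanly strips away these corrections, to verify that the surviving generators are exactly $a_i, b^i, \phi^i, \psi_i$ (equivalently, that every higher differential in the spectral sequence vanishes), and to match the resulting representatives with the explicit description of $\mathcal{VA}_{K_N, M}^{(n)}$.
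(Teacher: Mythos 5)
First, note that the paper does not prove this proposition: it is quoted from [B1, Proposition 9.3], and the only in-text justification offered is the two-sentence remark that $D_{0,1}$ preserves each $\text{Fock}_{M\oplus(n+\sum_j \mathbb{N}v_j)}$ and that all of these subcomplexes look like the $n=0$ one. Your decomposition by fractional parts into box sectors reproduces exactly that remark, and your identification of $(b^i,a_j)$ and $(\phi^i,\psi_j)$ as free bosonic and fermionic pairs via $v_i^\vee\cdot v_j=\delta_{ij}$ is the correct description of the answer, so the structural skeleton of your argument is sound and consistent with what [B1] actually does.

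The gap is in the middle: the entire content of the proposition is the computation of the cohomology of a single box sector, and you do not carry it out. Your Koszul model is not obviously an accurate associated graded of $D_{0,1}$: since $n\cdot\Psi(z)=\sum_k n\cdot\Psi[k]z^{-k}$ and $e^{\int v_j\cdot A(z)}$ acting on $|m,n_1\rangle$ carries a factor $z^{v_j\cdot m}$, the residue extracts a $\Psi$-mode whose index depends on the $M$-charge $m$ of the state it hits, so the differential is not uniformly ``multiply by $x_j$ and pair with $\theta_j$'' across the Fock space; in particular the claim that the first page of your spectral sequence lives in a single Koszul degree is unsubstantiated. You concede this yourself in the final paragraph (``the real work is to choose a filtration\dots''), which makes the proposal a plan rather than a proof. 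What is actually needed, and what [B1] supplies, is the reduction to the rank-one case by factoring $\text{Fock}_{M\oplus K_N}$ over the rays of the simplicial cone $K_N$, followed by an explicit rank-one computation (a contracting homotopy or a character count) showing that the $D_{0,1}$-cohomology of a single lattice--fermion pair is generated precisely by $b$, $a$, $\phi$, $\psi$; without that step the identification of the cohomology with $\oplus_n\mathcal{VA}_{K_N,M}^{(n)}$ is asserted, not proved.
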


Basically, 
the differential $D_{0, 1}$ preserves $\text{Fock}_{M \oplus (n + \sum_j \mathbb N v_j)}$
for each $n \in Box (K_N)$.
The cohomology $\text{Fock}_{M \oplus (n +  \sum_j \mathbb N v_j )}$
with respect to $D_{0, 1}$
for various $n$ all look like the cohomology of 
$\text{Fock}_{M \oplus \sum_j \mathbb N v_j}$ with respect to $D_{0, 1}$. 

\section{Elliptic genera of BH-models}

We aim to derive a formula for the supertrace of $y^{J[0]} q^{L[0]}$
on the cohomology of $\text{Fock}_{M \oplus K_N}$ with respect to $D_{0, 1}$. 
By (\ref{dec}), 
it is sufficient to compute the supertrace of $y^{J[0]} q^{L[0]}$
on each summand 
$\mathcal{VA}_{K_N, M}^{(n)}$. 
For $n = 0$, 
$\mathcal{VA}_{K_N, M}$ consists of those from 
$\mathcal{VA}_{K_N, N_0^\vee}$
whose $A[0]$-eigenvalue lies in $M$. 
The vertex superalgebra 
$\mathcal{VA}_{K_N, N_0^\vee}$
is generated by 
the $2d$ bosonic fields $b^i(z)$, $a_i(z)$, 
and $2d$ fermionic fields
$\phi^i(z)$, and $\psi_i(z)$ 
with the following OPE
\begin{eqnarray}
a_i (z) b^j (w) \sim \frac{ \delta_{ij} }{z-w}, 
& & 
\phi^i (z) \psi_j (w) \sim \frac{\delta_{ij}}{z-w}
\nonumber
\end{eqnarray}
(all other OPEs vanish). 
These fields (by the state-field correspondence)
admit the following
$(J[0], L[0])$-eigenvalues:
\begin{eqnarray}
                            & J[0]                                       & L[0]   \nonumber\\
a_i(z) &   - \text{deg}^\vee \cdot v_i^\vee       &  1 \nonumber \\
b^i(z) &    \text{deg}^\vee \cdot v_i^\vee     &  0  \nonumber \\
\phi^i(z) &  \text{deg}^\vee \cdot v_i^\vee - 1 & 1 \nonumber \\
\psi_i(z) & - \text{deg}^\vee \cdot v_i^\vee  +1  &  0 \nonumber 
\end{eqnarray}
Note that 
$\text{deg}^\vee = \sum_{j=1}^d q_j v_j$ (see (2.15)), 
hence $\text{deg}^\vee \cdot v_i^\vee = q_i$.
The previous table becomes
\begin{eqnarray}
                            & J[0]                                       & L[0]   \nonumber\\
a_i(z) &  -q_i    &  1 \nonumber \\
b^i(z) &   q_i  &  0  \nonumber \\
\phi^i(z) &  q_i - 1 &  1  \nonumber \\
\psi_i(z) &  -q_i +1  & 0 \nonumber 
\end{eqnarray}
Now, 
the supertrace of 
$y^{J[0]} q^{L[0]}$ on $\mathcal{VA}_{K_N, N_0^\vee}$ can be computed as follows:
\begin{eqnarray}
\text{ST}_{\mathcal{VA}_{K_N, N_0^\vee}} y^{J[0]} q^{L[0]} = 
\prod_{i=1}^d 
\frac{
\prod_{k \geq 0} 
(1- y^{ - q_i+1} q^{k }) 
\prod_{k \geq 1}
(1 - y^{q_i-1} q^{ k})
}{
\prod_{k \geq 0}
(1- y^{q_i} q^{ k})
\prod_{k \geq 1}
(1- y^{-q_i} q^{k})
}
\end{eqnarray}

The infinite products on the numerator
come from the modes of $\phi^i(z)$ and $\psi_i(z)$; 
the products on the denominator 
come from the modes of $a_i(z)$ and $b^i(z)$. 
This expression involves rational powers of $y$ and $q$. 
To extract the supertrace of $y^{J[0]} q^{L[0]}$ 
on the subalgebra $\mathcal{VA}_{K_N, M} \subset \mathcal{VA}_{K_N, N_0^\vee}$ 
from (4.1), 
we need to insert certain roots of $1$ to eliminate the terms 
contributed by those in $ \mathcal{VA}_{K_N, N_0^\vee}$ 
whose $A[0]$-eigenvalue lies outside $M$. 

Recall the finite abelian group $G = N/N_0$. 
As a set, $G$ is isomorphic to Box$(K_N)$. 
For any $n_1 \in N$, 
we define 
\begin{eqnarray}
\theta_j( n_1 ) & = &  v_j^\vee \cdot n_1.
\end{eqnarray}
Consider the group algebra 
$\mathbb C[N_0^\vee] = \mathbb C[x_1^{\pm 1}, \cdots, x_d^{\pm 1}]$. 
The variables $(x_j)$ correspond to the basis $(v_j^\vee)$ of $N_0^\vee$. 
The group 
$G$ acts on $\mathbb C[N_0^\vee]$ as follows:
for any $n_1 \in N/N_0$, $m \in N_0^\vee$, 
we have 
\begin{eqnarray}
{n_1}_\cdot [m] & = & \text{exp} (2 \pi i ( n_1 \cdot m) ) \, [m].
\nonumber
\end{eqnarray}
Then the $G$-invariant of $\mathbb C[N_0^\vee]$ is $\mathbb C[M]$, 
i.e. $\mathbb C[N_0^\vee]^G = \mathbb C[M]$. 
There is an "averaging over $G$" operation 
from 
$\mathbb C[N_0^\vee]$ to $\mathbb C[M]$
that we can use to obtain the supertrace of 
$y^{J[0]} q^{L[0]}$ on $\mathcal{VA}_{K_N, M}$, 
that is to insert 
$\frac{1}{|G|} \sum_{n_1 \in G} \text{exp} ( 2 \pi i \, m \cdot n_1 )$ 
in front of the term contributed by 
element
$\prod A[\cdots] \prod B[\cdots] \prod \Phi[\cdots] \prod \Psi[\cdots] |m, 0 \rangle \in 
\mathcal{VA}_{K_N, N_0^\vee}$. 
Hence, 
we have
\begin{eqnarray}
& & 
\text{ST}_{\mathcal{VA}_{K_N, M}} y^{J[0]} q^{L[0]} 
 \\
& = & 
\frac{1}{|G|} \sum_{n_1 \in G} 
\prod_{j=1}^d 
\frac{
\prod_{k \geq 0} 
(1- y^{ - q_j+1} q^{k } e^{ -2 \pi i \theta_j (n_1)} )
\prod_{k \geq 1}
(1 - y^{q_j-1} q^{ k} e^{2 \pi i \theta_j (n_1)})
}{
\prod_{k \geq 0}
(1- y^{q_j} q^{ k} e^{ 2 \pi i \theta_j (n_1)})
\prod_{k \geq 1}
(1- y^{-q_j} q^{ k} e^{ -2 \pi i \theta_j (n_1)})
}
\nonumber
\end{eqnarray}
In general for $n \in \text{Box}(K_N)$, 
the supertrace of $y^{J[0]}q^{L[0]}$ on 
$\mathcal{VA}_{K_N, M}^{(n)}$
is given by 
\begin{eqnarray} \label{t1}
& & 
\text{ST}_{\mathcal{VA}_{K_N, M}^{(n)}}  y^{J[0]} q^{L[0]} 
 \\
& = & 
(y^{-1} q)^{\text{deg} \cdot n} 
\frac{1}{|G|} \sum_{n_1 \in G} 
\prod_{j=1}^d 
\frac{
\prod_{k \geq 0} 
(1- y^{ - q_j+1} q^{k - \theta_j(n) } e^{ -2 \pi i \theta_j (n_1)} )
\prod_{k \geq 1}
(1 - y^{q_j-1} q^{ k +  \theta_j(n) } e^{2 \pi i \theta_j (n_1)})
}{
\prod_{k \geq 0}
(1- y^{q_j} q^{ k +  \theta_j(n) } e^{ 2 \pi i \theta_j (n_1)})
\prod_{k \geq 1}
(1- y^{-q_j} q^{ k -  \theta_j(n)} e^{ -2 \pi i \theta_j (n_1)})
}
\nonumber
\end{eqnarray}
The above is understood as a Laurent series in $y$, $q$ with rational powers and non-negative powers of $q$. 
Indeed, 
each $\theta_j(n)$ lies in $[0, 1)$. 
The powers of $q$ that appear on the denominator are all non-negative, 
hence when the reciprocal of the denominator terms are expressed as a power series, 
only non-negative powers of $q$ appear. 
On the numerator, 
the only term that could have a negative power of $q$
is when $k=0$ in the first infinite product.
However, 
we have an extra term $q^{\text{deg} \cdot n}$ in the front, 
and the fact that $\text{deg} \cdot n = \sum_j \theta_j(n)$ takes care of it. 
This double series converges absolutely 
when $|q| < | y^{q_j} q^{\theta_j(n)}| < 1$ for all $j$. 
In fact, 
we can write it in terms of the theta function. 
Let 
\begin{eqnarray} 
\Theta(\nu, \tau) & = &
i q^{\frac{1}{8}} e^{- i \pi \nu} (1 - e^{i 2 \pi \nu}) 
\prod_{n=1}^\infty 
(1- q^n) (1 - q^n e^{i 2 \pi \nu} ) (1 - q^n e^{- i 2 \pi \nu})
\end{eqnarray}
where 
$q = e^{i 2 \pi \tau}$
be Jacobi's theta function. 
It is a holomorphic function for $\nu \in \mathbb C$,  $\tau \in H$
where $H$ is the upper-half plane. 
If we fix $\tau \in H$, 
then $\Theta(\nu, \tau)$, 
as a function of $\nu$, 
has single zeroes at all the lattice points in $\mathbb Z \tau + \mathbb Z$. 
Multiplying both the numerator and the denominator of (\ref{t1}) by 
$\prod_{n =1}^\infty (1- q^n)$, 
we obtain 
\begin{eqnarray}
& & 
\text{ST}_{\mathcal{VA}_{K_N, M}^{(n)}}  y^{J[0]} q^{L[0]} 
 \\
& = & 
q^{\text{deg} \cdot n}
\frac{1}{|G|} 
\sum_{n_1 \in G}
\prod_{j=1}^d
y^{- \theta_j(n)}
\frac{
e^{i\pi \{ (1- q_j) z - \theta_j(n) \tau - \theta_j(n_1) \}}
\Theta( (1-q_j) z - \theta_j (n) \tau - \theta_j(n_1), \tau )
}{
e^{i \pi \{ q_j z + \theta_j(n) \tau + \theta_j(n_1) \}} 
\Theta( q_j z + \theta_j(n) \tau + \theta_j(n_1), \tau)
}
\nonumber \\
& = &
q^{\text{deg} \cdot n}
\frac{1}{|G|} 
\sum_{n_1 \in G}
e^{i \pi z (d - 2 \sum_j q_j)} e^{- i 2 \pi \tau \sum_j  \theta_j(n)} e^{-i 2 \pi \sum_j \theta_j(n_1)}
\prod_{j=1}^d
e^{- i 2 \pi z \theta_j(n)}
\frac{
\Theta(\cdots)
}{
\Theta(\cdots)
}
\nonumber
\end{eqnarray}
where $y = e^{i 2 \pi z}$, $q = e^{i 2 \pi \tau}$. 
Note that 
$q^{\text{deg} \cdot n}$ cancels with
$e^{- i 2 \pi \tau \sum_j  \theta_j(n)}$.
Moreover, 
$\sum_j \theta_j(n_1) = \text{deg} \cdot n_1 \in \mathbb Z$
because 
$\text{deg} \in M$, $n_1 \in N$, and $M$ and $N$ are dual lattices, 
hence 
$e^{-i 2 \pi \sum_j \theta_j(n_1)} =1$. 
We have 
\begin{eqnarray} \label{t2}
& & 
\text{ST}_{\mathcal{VA}_{K_N, M}^{(n)}}  y^{J[0]} q^{L[0]} 
 \\
& = & 
y^{\frac{1}{2} (d - 2 \sum_j q_j)}
\frac{1}{|G|} 
\sum_{n_1 \in G}
\prod_{j=1}^d
e^{- i 2 \pi z \theta_j(n)}
\frac{
\Theta( (1-q_j) z - \theta_j (n) \tau - \theta_j(n_1), \tau )
}{
\Theta( q_j z + \theta_j(n) \tau + \theta_j(n_1), \tau)
}. 
\nonumber
\end{eqnarray}

Note that 
\begin{eqnarray}
q_j & = & v_j^\vee \cdot \text{deg}^\vee 
= \theta_j (\text{deg}^\vee)
\end{eqnarray}
for $\text{deg}^\vee \in N$. 
The number 
\begin{eqnarray}
\hat c & = & 
 d - 2 \sum_j q_j = d - 2 \,\, \text{deg} \cdot \text{deg}^\vee
\end{eqnarray}
is the central charge of the $N=2$ structure on $V_{1, 1}$
([B2]). 

\begin{theorem}
The elliptic genus of the Berglund-H\"ubsch model $W/G$
defined in [BHe] is equal to 
$y^{- \frac{1}{2} \hat c} \text{SuperTrace}_{V_{1, 1}} y^{J[0]} q^{L[0]}$.
\end{theorem}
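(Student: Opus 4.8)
The plan is to show that the explicit sum–product expression we have derived for $\mathrm{SuperTrace}_{V_{1,1}} y^{J[0]} q^{L[0]}$ coincides, after multiplication by $y^{-\frac12\hat c}$, with the closed-form orbifold elliptic genus of $W/G$ recorded in [BHe]. First I would assemble the left-hand side. Since the supertrace is a graded Euler characteristic, it is unchanged under passage to cohomology with respect to an anticommuting differential that preserves the $(J[0],L[0])$-grading, using the finiteness of eigenspaces (Proposition [B2, Theorem 6.2.1]) and the fact that $D_{1,0}$ commutes with $J[0]$ and $L[0]$. Combined with Proposition [B2, Theorem 5.2.3] and the decomposition (\ref{dec}), this gives
\begin{equation*}
\mathrm{SuperTrace}_{V_{1,1}} y^{J[0]} q^{L[0]} = \sum_{n\in \mathrm{Box}(K_N)} \mathrm{ST}_{\mathcal{VA}_{K_N,M}^{(n)}} y^{J[0]} q^{L[0]},
\end{equation*}
with each summand given by (\ref{t2}). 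Multiplying by $y^{-\frac12\hat c} = y^{-\frac12(d-2\sum_j q_j)}$ cancels the prefactor in (\ref{t2}) and leaves a double sum over $(n,n_1)\in \mathrm{Box}(K_N)\times G$ of products of theta-quotients.

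Next I would set up the dictionary between this double sum and the twisted–twined sectors of the orbifold. The key input from Section 2 is the identification $G\cong N/N_0 \subset \mathrm{Aut}(W)$: the element represented by $n=\sum_j \theta_j(n)\,v_j$, with $\theta_j(n)=v_j^\vee\cdot n$, acts on the coordinate $x_j$ by the phase $e^{2\pi i\,\theta_j(n)}$, and the Box normalization $\theta_j(n)\in[0,1)$ is exactly the canonical choice of these phases. Thus $n\in \mathrm{Box}(K_N)$ plays the role of the group element twisting the boundary conditions in time (the $\tau$-shift $\theta_j(n)\tau$), while $n_1\in G$ plays the role of the group element inserted in the trace (the constant shift $\theta_j(n_1)$). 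This matches the structure of the [BHe] elliptic genus, namely $\frac{1}{|G|}\sum_{g,h}$ over all commuting pairs (all pairs, since $G$ is abelian) of a product over the $d$ Landau–Ginzburg fields of theta-quotients twisted by $g$ and $h$.

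The term-by-term comparison then proceeds as follows. Using that $\Theta(-\nu,\tau)=-\Theta(\nu,\tau)$, I would rewrite each numerator factor
\begin{equation*}
\Theta\bigl((1-q_j)z-\theta_j(n)\tau-\theta_j(n_1),\tau\bigr) = -\,\Theta\bigl((q_j-1)z+\theta_j(n)\tau+\theta_j(n_1),\tau\bigr),
\end{equation*}
so that numerator and denominator carry the same twist $\theta_j(n)\tau+\theta_j(n_1)$ and differ only in the R-charge, exactly as in the standard single-field contribution $\Theta((q_j-1)z+\cdots)/\Theta(q_j z+\cdots)$; the accumulated sign over the $d$ fields is $(-1)^d$. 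What then remains is to check that the leftover normalization — the prefactor $\prod_j e^{-i2\pi z\,\theta_j(n)} = y^{-\mathrm{deg}\cdot n}$, the overall $(-1)^d$, and the factor $y^{-\frac12\hat c}$ — reproduces precisely the phase conventions of [BHe]. Here I would use $q_j=\theta_j(\mathrm{deg}^\vee)$ together with $\sum_j\theta_j(n_1)=\mathrm{deg}\cdot n_1\in\mathbb Z$ (already exploited to normalize (\ref{t2})) to show that the discrete-torsion-type phases are trivial and that the weight and index of the resulting Jacobi form agree.

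I expect the main obstacle to be exactly this final bookkeeping of phases and signs: reconciling the vertex-algebra normalization of the twisted-sector ground states — encoded in the $q^{\mathrm{deg}\cdot n}$ and $y$-dependent prefactors and in the choice of Box representatives — with the conventions under which [BHe] defines the orbifold elliptic genus, including fixing the global $(-1)^d$ and verifying that the averaging over $G$ introduces no nontrivial discrete torsion. Once the single-field factors and the two group-element shifts are matched, the equality of the full double sums is formal.
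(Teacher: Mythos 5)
Your proposal is correct and follows essentially the same route as the paper: reduce $\text{SuperTrace}_{V_{1,1}}y^{J[0]}q^{L[0]}$ to the supertrace on $\bigoplus_{n\in \text{Box}(K_N)}\mathcal{VA}_{K_N,M}^{(n)}$ via Proposition [B2, Theorem 5.2.3], the decomposition (\ref{dec}), and the invariance of the supertrace under parity-changing differentials commuting with $J[0]$ and $L[0]$, then sum (\ref{t2}) over $\text{Box}(K_N)\cong G$ and match the result with the formulae of [BHe]. The one step where the paper is more careful than your one-line Euler-characteristic claim is the reduction itself: the $(J[0],L[0])$-eigenspaces of $\text{Fock}_{M\oplus K_N}$ are infinite-dimensional (so the argument cannot be run directly on the Fock space, and [B2, Theorem 6.2.1] is finiteness for $V_{1,1}$ rather than the finiteness you actually need, which holds on the $D_{0,1}$-cohomology by Proposition \ref{MKNDg}), so the paper filters the double complex so that the spectral sequence starts from the $D_{0,1}$-cohomology, uses $\text{deg}\cdot n<d$ for $n\in\text{Box}(K_N)$ to force degeneration after finitely many steps, and applies the supertrace-invariance to \emph{all} the spectral sequence differentials, not only the induced $D_{1,0}$.
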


\begin{proof}
Consider the double complex 
$(\text{Fock}_{M \oplus K_N}, D_{1, 0}, D_{0, 1})$
with bi-grading $(\text{deg}^\vee \cdot A[0], \text{deg} \cdot B[0])$. 
It lies in the upper half plane because 
$\text{deg} \cdot B[0]$ has non-negative eigenvalues on $K_N$.  
The vertex algebra $V_{1, 1}$ is the cohomology of 
the total complex. 
Consider the filtration of the total complex such that the $E^0$ terms 
of the associated spectral sequence is the cohomology of $\text{Fock}_{M \oplus K_N}$
with respect to the (vertical) differential $D_{0, 1}$. 
The filtration is bounded below and exhaustive, 
so the spectral sequence converges to the cohomology of the total complex. 
The cohomology of 
$\text{Fock}_{M \oplus K_N}$ with respect to $D_{0, 1}$
is described in Proposition \ref{MKNDg}. 
The $\text{deg} \cdot B[0]$-grading on $\text{Fock}_{M \oplus K_N}/D_{0, 1}$ 
is bounded by $d$. 
Indeed, 
each summand $\mathcal{VA}_{K_N, M}^{(n)}$ in (\ref{dec})
has the $\text{deg} \cdot B[0]$-grading equal to $\text{deg} \cdot n$
which is $<d$ because $n$ lies in the Box$(K_N)$. 
Hence, 
the spectral sequence degenerates after finitely many steps. 
Since the differentials of the spectral sequence change parity and 
commute with $J[0]$ and $L[0]$, 
they have no effect on the supertrace. 
We have 
\begin{eqnarray}
\text{SuperTrace}_{V_{1, 1}} y^{J[0]} q^{L[0]}
& =  &
\text{SuperTrace}_{\text{Fock}_{M \oplus K_N}/D_{0, 1}} y^{J[0]} q^{L[0]}. 
\nonumber
\end{eqnarray}
Finally, 
we sum up (\ref{t2}) over $n \in \text{Box}(K_N) \cong G$. 
When multiplied with $y^{- \frac{1}{2} \hat c}$, 
it matches with the formulae (2.6), (2.7), and (2.14) of [BHe]. 
\end{proof}

We denote the elliptic genus of $W/G$ by 
\begin{eqnarray}
Ell(W/G, z, \tau) & = & 
\frac{1}{|G|} 
\sum_{n, n_1 \in G}
\prod_{j=1}^d
e^{- i 2 \pi z \theta_j(n)}
\frac{
\Theta( (1-q_j) z - \theta_j (n) \tau - \theta_j(n_1), \tau )
}{
\Theta( q_j z + \theta_j(n) \tau + \theta_j(n_1), \tau)
}. 
\label{eg}
\end{eqnarray}
Our next goal is to show that this is a weak Jacobi form. 
First, we establish the holomorphicity. 

\begin{theorem}
$Ell(W/G, z, \tau)$ is a holomorphic function of two variables for all 
$z \in \mathbb C$, $\tau \in H$. 
\end{theorem}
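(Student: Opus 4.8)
The plan is to read the possible singularities of (\ref{eg}) off the zeros of the denominator theta-functions, use the quasi-periodicity of $\Theta$ to reduce to finitely many potential polar loci modulo $\mathbb{Z}+\mathbb{Z}\tau$, and then show that the residue of the whole sum along each such locus vanishes. Two properties of $\Theta$ drive this: it is holomorphic on $\mathbb{C}\times H$ with only simple zeros, located exactly at $\nu\in\mathbb{Z}+\mathbb{Z}\tau$, and it is quasi-periodic,
\[
\Theta(\nu+1,\tau)=-\Theta(\nu,\tau),\qquad \Theta(\nu+\tau,\tau)=-e^{-i\pi\tau-2\pi i\nu}\Theta(\nu,\tau).
\]
Abbreviating $w_j:=q_j z+\theta_j(n)\tau+\theta_j(n_1)$ for the denominator argument in the $(n,n_1)$-summand, the numerator argument is exactly $z-w_j$, since $(1-q_j)z-\theta_j(n)\tau-\theta_j(n_1)=z-w_j$. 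Hence each summand of (\ref{eg}) equals $\prod_j e^{-2\pi i z\theta_j(n)}\,\Theta(z-w_j,\tau)/\Theta(w_j,\tau)$ and is holomorphic away from the hypersurfaces $w_j\in\mathbb{Z}+\mathbb{Z}\tau$, along which, the $w_j$ being affine in $z$ with nonzero slope $q_j$ and the zeros of $\Theta$ being simple, it has at worst a simple pole.

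Next I exploit $\langle J_W\rangle\subset G$: the class $\mathrm{deg}^\vee$ of $J_W$ lies in $G$ and $\theta_j(\mathrm{deg}^\vee)=q_j$. The substitution $n_1\mapsto n_1-\mathrm{deg}^\vee$ (a bijection of $G$) leaves every $w_j$ invariant while sending $z\mapsto z+1$; tracking $\Theta(z-w_j)\mapsto-\Theta(z-w_j)$ and $e^{-2\pi i z\theta_j(n)}\mapsto e^{-2\pi i\theta_j(n)}e^{-2\pi i z\theta_j(n)}$ and using $\sum_j\theta_j(n)=\mathrm{deg}\cdot n\in\mathbb{Z}$ yields $Ell(W/G,z+1,\tau)=(-1)^d\,Ell(W/G,z,\tau)$; an analogous substitution $n\mapsto n-\mathrm{deg}^\vee$ (reduced back into $\mathrm{Box}(K_N)$) gives $Ell(W/G,z+\tau,\tau)=\Phi(z,\tau)\,Ell(W/G,z,\tau)$ with $\Phi$ a nowhere-vanishing exponential. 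Since the multipliers are nonzero, the polar set of $Ell$ is invariant under $z\mapsto z+\mathbb{Z}+\mathbb{Z}\tau$; as $\theta_{j_0}(n),\theta_{j_0}(n_1)$ take finitely many values and $q_{j_0}$ is fixed, it suffices to exclude poles along finitely many loci $w_{j_0}=a+b\tau$ with $a,b\in\mathbb{Z}$. Comparing (\ref{eg}) with the convergent series recalled after (\ref{t1}) shows $Ell$ is already holomorphic on that tube domain and hence, by the two quasi-periodicities, on all its $(\mathbb{Z}+\mathbb{Z}\tau)$-translates, leaving only the classes in the uncovered gaps. These gaps are genuine: for a weight such as $q_j=2/5$ the common band of the Box sectors has height below $\mathrm{Im}\,\tau$, so a covering argument alone cannot finish and the residue computation is unavoidable.

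The crux is this residue cancellation. Along $w_{j_0}=a+b\tau$ the variable $z$ is pinned, and the numerator factor $\Theta(z-(a+b\tau),\tau)$ is a nonzero multiple of $\Theta(z,\tau)$, so it does not vanish generically and the pole does not die term by term. The contributing summands are precisely the sectors $(n,n_1)$ with $\theta_{j_0}(n)$ and $\theta_{j_0}(n_1)$ fixed, i.e.\ a product of cosets of $\ker(\theta_{j_0})$ in the two copies of $G$, and one must show their residues sum to zero. My approach is to identify this fiberwise residue sum with a nonzero scalar times an elliptic-genus-type combination of the same shape as (\ref{eg}) for the reduced combinatorial data obtained by freezing the $j_0$-th direction and replacing $G$ by the subquotient cut out by $\theta_{j_0}$ (most transparently when $x_{j_0}$ is a Fermat factor); the Calabi--Yau relations $\sum_j q_j\in\mathbb{Z}$ and $\sum_j\theta_j(n_1)=\mathrm{deg}\cdot n_1\in\mathbb{Z}$ are exactly what keep the reduced datum Calabi--Yau, and the pinned special value of $z$ is what should force the reduced combination into a vanishing regime; failing that, I would instead seek a sign-reversing involution on the contributing coset pairing the residues to cancel. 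Making this reduction precise—matching the arguments $w_j$ for $j\neq j_0$ on the locus to the theta arguments of the reduced model and confirming the forced vanishing—is the main obstacle, as it requires controlling the joint effect of the orbifold average over $n_1$ and the twisted-sector sum over $n$ under deletion of a coordinate.
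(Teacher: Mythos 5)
Your proposal does not reach a proof: the step you yourself identify as ``the crux''---showing that the residues of the $(n,n_1)$-summands along a fixed polar locus $w_{j_0}\in\mathbb Z+\mathbb Z\tau$ sum to zero---is left as a plan with two alternative strategies (a reduction to a lower-dimensional elliptic genus, or a sign-reversing involution), neither of which is carried out. Since everything before that point only organizes the potential polar loci, the theorem is not established.

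More importantly, the premise that forces you into this residue computation is false. You assert that along $w_{j_0}=a+b\tau$ the numerator factor $\Theta(z-w_{j_0},\tau)$ ``does not vanish generically'' because it is a nonzero multiple of $\Theta(z,\tau)$; but on that locus $z$ is pinned, and its pinned value can itself lie in $\mathbb Z+\mathbb Z\tau$. For a Fermat factor $x^a$ with $q=1/a$ and $a\theta(n),a\theta(n_1)\in\mathbb Z$, the condition $\tfrac{z}{a}+\theta(n)\tau+\theta(n_1)=a'+b'\tau$ forces $z=(aa'-a\theta(n_1))+(ab'-a\theta(n))\tau\in\mathbb Z+\mathbb Z\tau$, so the numerator of the \emph{same} factor vanishes there and the pole is killed term by term, with no averaging over $G$ needed. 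The paper's proof exploits exactly this: by the Kreuzer--Skarke classification every non-degenerate $W$ decouples into Fermat, loop, and chain pieces; for Fermat the $j$-th denominator zero is cancelled by the $j$-th numerator zero as above, for loop type the relations $a_jq_j+q_{j+1}=1$ and $a_j\theta_j+\theta_{j+1}\in\mathbb Z$ cancel the $j$-th denominator against the $(j{+}1)$-st numerator, and for chain type an iterative argument distributes the zeros of the last denominator factor among several numerator factors. All of this happens inside a single $(n,n_1)$-summand, so each summand of (\ref{eg}) is already holomorphic. Your approach never invokes the classification of non-degenerate potentials, which is the essential input; without it there is no reason the individual summands should be holomorphic, and the group-averaged residue cancellation you would then need is both unproved and, as the Fermat computation shows, unnecessary.
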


\begin{proof}
We will show explicitly with appeal to the classification of non-degenerate potentials
that the zeroes of the theta functions on the denominator of (\ref{t2})
cancel with (some of) the zeroes of the theta functions on the numerator. 
Then it follows that the double series (\ref{t1}) converge absolutely 
to holomorphic functions for all 
$y \in \mathbb C^*, |q| <1$. 

Any non-degenerate potential $W$ is a sum of decoupled potentials of three types: 
Fermat, loop, and chain ([KS]). 
It is sufficient to prove the holomorphicity for $W$ of each type. 
If $W = x^a$, $a >2$ is of the Fermat type, 
then $q = 1/a$
and $\theta(n) \in (1/a) \mathbb Z$
for all $n \in G = \mathbb Z_a$. 
The zeroes of 
$\Theta(q z + \theta(n) \tau + \theta(n_1), \tau)$ 
correspond to those $z, \tau$ such that 
\begin{eqnarray}
q z + \theta(n) \tau + \theta(n_1) \in \mathbb Z \tau + \mathbb Z. 
\nonumber
\end{eqnarray}
When this is true, 
multiplying with $a-1$, 
we get 
\begin{eqnarray}
(1- q) z + (a-1) \theta(n) \tau + (a-1) \theta(n_1) \in \mathbb Z (a-1) \tau + \mathbb Z (a-1).
\nonumber
\end{eqnarray}
Since $a \theta(n), a \theta(n_1) \in \mathbb Z$, 
it follows that 
$(1-q) z - \theta(n) \tau - \theta(n_1) \in \mathbb Z \tau + \mathbb Z$, 
hence they are also zeroes of the numerator. 

If $W = x_1^{a_1} x_2 + x_2^{a_2} x_3 + \cdots + x_{k-1}^{a_{k-1}} x_k + x_k^{a_k} x_1$
is of the loop type, 
then we have 
\begin{eqnarray}
a_i q_i + q_{i+1} = 1 & \text{  for  } & 1 \leq i \leq k-1, 
\nonumber \\ 
a_k q_k + q_1 =1    &                        &
\nonumber \\
a_i \theta_i(n) + \theta_{i+1} (n) \in \mathbb Z                    & \text{for} & 
1 \leq i \leq k-1, n \in \text{Box} (K_N)
\nonumber \\
a_k \theta_k (n) + \theta_1(n) \in \mathbb Z     & \text{for} &   n \in \text{Box} (K_N). 
\nonumber 
\end{eqnarray}
The same arguments apply as in the Fermat case: 
the zeroes of 
$\Theta(q_j z + \theta_j(n) \tau + \theta_j(n_1), \tau)$
cancel with the zeroes of 
$\Theta((1-q_{j+1}) z - \theta_{j+1}(n) \tau - \theta_{j+1} (n_1), \tau)$
for $1 \leq j \leq k-1$, 
and the zeroes of 
$\Theta(q_k z + \theta_k(n) \tau + \theta_k(n_1), \tau)$
cancel with the zeroes of 
$\Theta((1-q_1) z - \theta_{1}(n) \tau - \theta_{1} (n_1), \tau)$.

If $W = x_1^{a_1} x_2 + x_2^{a_2} x_3 + \cdots + x_{k-1}^{a_{k-1}} x_k + x_k^{a_k}$
is of the chain type, 
then 
\begin{eqnarray}
a_i q_i + q_{i+1} = 1 & \text{  for  } & 1 \leq i \leq k-1, 
\nonumber \\ 
a_k q_k  =1    &                        &
\nonumber \\
a_i \theta_i(n) + \theta_{i+1} (n) \in \mathbb Z                    & \text{for} & 
1 \leq i \leq k-1, n \in \text{Box} (K_N)
\nonumber \\
a_k \theta_k (n) \in \mathbb Z     & \text{for} &   n \in \text{Box} (K_N). 
\nonumber 
\end{eqnarray}
The previous argument fails to cancel the zeroes of 
$\Theta(q_k z + \theta_k(n) \tau + \theta_k(n_1), \tau)$. 
Instead, 
we will "distribute" its zeroes to each term of the numerator. 
Here is the mechanism of how this works, isolated. 
Suppose we have a ratio of theta functions
\begin{eqnarray} \label{ratio}
\frac{1}{\Theta( \frac{k}{ml} z + \alpha_3 \tau + \beta_3, \tau)} \,\, 
\frac{\Theta( \frac{k}{m} z + \alpha_1 \tau + \beta_1, \tau)}
{\Theta( \frac{1}{m} z + \alpha_2 \tau + \beta_2, \tau)}
\end{eqnarray}
where $m, k, l$ are integers, $\alpha_i$, $\beta_i$ are rational numbers, 
$(m, k) =1$, 
and 
$m \alpha_2 \in \mathbb Z$, 
$m \beta_2  \in \mathbb Z$. 
Moreover, 
$k \alpha_2 \equiv \alpha_1$ (mod $\mathbb Z$), 
$k \beta_2 \equiv \beta_1$ (mod $\mathbb Z$), 
$l \alpha_3 \equiv \alpha_1$ (mod $\mathbb Z$), 
$l \beta_3 \equiv \beta_1$ (mod $\mathbb Z$). 
The zeroes of the first theta function on the denominator 
lie on the lines 
\begin{eqnarray}
\frac{k}{ml} z + \alpha_3 \tau + \beta_3 = p \tau + q, & & 
p, q \in \mathbb Z
\nonumber
\end{eqnarray}
which is equivalent to 
\begin{eqnarray} \label{l1}
\frac{k}{m} z + l \alpha_3 \tau + l \beta_3 = p  l \tau + q l, & & 
p, q \in \mathbb Z. 
\end{eqnarray}
By assumption, 
this family of lines belong to the set of lines containing the zeroes of the numerator. 
Similarly, 
the second theta function on the denominator 
has zeroes on the lines 
\begin{eqnarray} \label{l2}
\frac{k}{m} z + k \alpha_2 \tau + k \beta_2 = p'  k \tau + q' k, & & 
p', q' \in \mathbb Z. 
\end{eqnarray}
These lines again coincide with some of the lines containing the zeroes of the numerator. 
If the two families of lines (\ref{l1}) and (\ref{l2}) have no intersection, 
then the zeroes of the denominator are all cancelled by the zeroes from the numerator, 
the ratio is therefore holomorphic. 
Otherwise, 
we have 
$\text{gcd} (k, l) | (k \alpha_2 - l \alpha_3)$, 
$\text{gcd} (k, l) | (k \beta_2 - l \beta_3)$. 
The lines in (\ref{l2}) that are not "eliminated" by the lines from the numerator are those
with $(p', q')$ such that 
\begin{eqnarray} \label{p'q'}
l | (p' k - k \alpha_2 + l \alpha_3),  & & 
l | (q' k - k \beta_2 + l \beta_3). 
\end{eqnarray}
Fix such a pair $(p', q')$, 
then every other such pair $(p'', q'')$ satisfy 
that 
\begin{eqnarray}
p'' - p', q'' - q' \in \frac{l}{\text{gcd} (k, l)} \mathbb Z. 
\nonumber
\end{eqnarray}
Hence, we have the following lines remaining
\begin{eqnarray}
\frac{1}{m} z + \alpha_2 \tau + \beta_2 = ( p' +  \frac{l}{\text{gcd} (k, l)} s ) \tau + (q' +  \frac{l}{\text{gcd} (k, l)} t), & & s, t \in \mathbb Z
\nonumber
\end{eqnarray}
or 
\begin{eqnarray}
\frac{ \text{gcd}(k, l) }{m l} z + \frac{ \text{gcd}(k, l) }{l} (\alpha_2 - p') \tau
+ \frac{ \text{gcd}(k, l) }{l} (\beta_2 - q') 
= s \tau + t, 
& & s, t \in \mathbb Z. 
\nonumber
\end{eqnarray}
Set 
\begin{eqnarray}
m^{\text{new}} = \frac{ml }{\text{gcd}(k, l)}, \qquad 
\alpha_2^{\text{new}} =  \frac{ \text{gcd}(k, l) }{l} (\alpha_2 - p'), \qquad 
\beta_2^{\text{new}} = \frac{ \text{gcd}(k, l) }{l} (\beta_2 - q'),
\nonumber
\end{eqnarray}
then it is clear that 
$m^{\text{new}} \alpha_2^{\text{new}}$,  
$m^{\text{new}} \beta_2^{\text{new}} \in \mathbb Z$. 
The ratio (\ref{ratio}) has 
the same poles as 
\begin{eqnarray} \label{x1}
\frac{1}{\Theta(\frac{1}{m^{\text{new}}} z + \alpha_2^{\text{new}} \tau + \beta_2^{\text{new}}, \tau)}.
\end{eqnarray}
Finally, 
consider 
$\Theta( (1- \frac{k}{ml}) z - \alpha_3 \tau - \beta_3, \tau)$. 
We have 
\begin{eqnarray} \label{x2}
1 - \frac{k}{ml} = \frac{ \frac{m l}{\text{gcd}(k, l)} - \frac{k}{\text{gcd}(k, l)} } {\frac{m l}{\text{gcd}(k, l)}}
= : \frac{k^{\text{new}}}{m^{\text{new}}}, 
\qquad 
(k^{\text{new}}, m^{\text{new}}) =1.
\end{eqnarray}
Moreover, 
\begin{eqnarray} \label{x3}
k^{\text{new}} \alpha_2^{\text{new}} = m (\alpha_2 -p') - \frac{k (\alpha_2 -p') }{l}
\equiv - \alpha_3 (\text{   mod  } \mathbb Z)
\end{eqnarray}
because $m \alpha_2 \in \mathbb Z$ by assumption 
and (\ref{p'q'}). 
Similarly, 
we have 
\begin{eqnarray} \label{x4}
k^{\text{new}} \beta_2^{\text{new}} = m (\beta_2 -q') - \frac{k (\beta_2 -q') }{l}
\equiv - \beta_3 (\text{   mod  } \mathbb Z)
\end{eqnarray}
All of (\ref{x1})-(\ref{x4})
will be the beginning of another round of the same arguments. 

Now, let us see how this applies to prove the holomorphicity of 
$Ell (W/G, z, \tau)$ 
for $W$ of the chain type. 
We need to examine
\begin{eqnarray}
\cdots \cdots
\frac{\Theta((1-q_{k-1}) z - \theta_{k-1} (n) \tau - \theta_{k-1} (n_1), \tau)}
{\Theta(q_{k-1} z + \theta_{k-1} (n) \tau + \theta_{k-1} (n_1), \tau)}
\frac{\Theta((1-q_k ) z - \theta_k (n) \tau - \theta_k (n_1), \tau)}
{\Theta(q_k z + \theta_k (n) \tau + \theta_k (n_1), \tau)}.
\nonumber
\end{eqnarray}
Recall that 
$q_k = \frac{1}{a_k}$ and 
$a_k \theta_k(n), a_k \theta_k(n_1) \in \mathbb Z$. 
It is clear that
the ratio of the last theta function on the numerator 
and the last two theta functions on the denominator 
satisfy the assumptions of the above discussion. 
We apply the above arguments finitely many times from right to left, 
in the end, 
we are able to cancel all zeroes of the denominator. 
\end{proof}

\begin{theorem} \label{modularity}
$Ell(W/G, z, \tau)$ is a weak Jacobi form of weight $0$ and index $\frac{\hat c}{2}$. 
\end{theorem}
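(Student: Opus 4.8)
The plan is to verify the two defining transformation laws of a weak Jacobi form of weight $0$ and index $m=\tfrac{\hat c}{2}$ directly from the closed formula (\ref{eg}), namely the elliptic transformation under $z\mapsto z+\lambda\tau+\mu$ ($\lambda,\mu\in\mathbb Z$) and the modular transformation under $SL_2(\mathbb Z)$, together with the weak growth condition. The only analytic input needed is the behaviour of Jacobi's theta function: the quasi-periodicities $\Theta(\nu+1,\tau)=-\Theta(\nu,\tau)$ and $\Theta(\nu+\tau,\tau)=-e^{-i\pi\tau-2\pi i\nu}\Theta(\nu,\tau)$, which follow at once from the product in the text, and the modular laws $\Theta(\nu,\tau+1)=e^{i\pi/4}\Theta(\nu,\tau)$ and $\Theta(\nu/\tau,-1/\tau)=-i\sqrt{-i\tau}\,e^{i\pi\nu^2/\tau}\Theta(\nu,\tau)$. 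A preliminary observation, proved from the quasi-periodicities and the fact that within each factor the numerator and denominator arguments sum to $z$, is that every summand of (\ref{eg}) is unchanged when any $\theta_j(n)$ or $\theta_j(n_1)$ is shifted by an integer; this legitimizes all the reindexings below, in which $n$ and $n_1$ are translated inside $G$.

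For the elliptic transformation I would treat $z\mapsto z+1$ and $z\mapsto z+\tau$ separately. Under $z\mapsto z+\tau$, reindex the sum by $n\mapsto n-\text{deg}^\vee$ (a bijection of $G$ since $\text{deg}^\vee\in N$); because $q_j=\theta_j(\text{deg}^\vee)$, the denominator argument of each factor returns to its original value while the numerator argument is shifted by $+\tau$. Applying $\Theta(\nu+\tau,\tau)=-e^{-i\pi\tau-2\pi i\nu}\Theta(\nu,\tau)$ and collecting prefactors, the $j$-th factor acquires $-\,e^{i\pi\tau(2q_j-1)}e^{2\pi i z(2q_j-1)}e^{2\pi i\theta_j(n_1)}$; taking the product over $j$ and using $\sum_j(2q_j-1)=2k-d=-\hat c$ (the Calabi-Yau condition $\sum_j q_j=k$) together with $\sum_j\theta_j(n_1)=\text{deg}\cdot n_1\in\mathbb Z$, one obtains $Ell(z+\tau,\tau)=(-1)^d e^{-i\pi\hat c\tau-2\pi i\hat c z}\,Ell(z,\tau)$. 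The case $z\mapsto z+1$ is handled identically with the reindexing $n_1\mapsto n_1-\text{deg}^\vee$ and gives $Ell(z+1,\tau)=(-1)^d\,Ell(z,\tau)$. These are precisely the index-$\tfrac{\hat c}{2}$ elliptic laws, the sign $(-1)^d=(-1)^{2m}$ being the standard multiplier attached to a half-integral index (note $\hat c=d-2k\equiv d \bmod 2$, so the index is a half-integer exactly when $d$ is odd).

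The modular invariance is the crux. Under $T\colon\tau\mapsto\tau+1$, the reindexing $n_1\mapsto n_1-n$ restores every theta argument after applying $\Theta(\nu,\tau+1)=e^{i\pi/4}\Theta(\nu,\tau)$, and since (\ref{eg}) has the same number ($d$) of theta factors in numerator and denominator the phases $e^{i\pi/4}$ cancel in pairs, giving $Ell(z,\tau+1)=Ell(z,\tau)$ and confirming weight $0$. Under $S\colon(z,\tau)\mapsto(z/\tau,-1/\tau)$ I would first rewrite the numerator and denominator arguments of the $j$-th factor as $\nu_j^{\mathrm{num}}/\tau$ and $\nu_j^{\mathrm{den}}/\tau$ with $\nu_j^{\mathrm{num}}=(1-q_j)z+\theta_j(n)-\theta_j(n_1)\tau$ and $\nu_j^{\mathrm{den}}=q_j z-\theta_j(n)+\theta_j(n_1)\tau$, so that the $S$-law for $\Theta$ applies. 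The weight-$\tfrac12$ automorphy factors $-i\sqrt{-i\tau}$ cancel between the $d$ numerator and $d$ denominator theta functions (again giving weight $0$), and the decisive point is that $\nu_j^{\mathrm{num}}+\nu_j^{\mathrm{den}}=z$, whence $(\nu_j^{\mathrm{num}})^2-(\nu_j^{\mathrm{den}})^2=z\big((1-2q_j)z+2\theta_j(n)-2\theta_j(n_1)\tau\big)$. After reindexing by the \emph{sector swap} $(n,n_1)\mapsto(n_1,-n)$ — which is exactly the substitution that returns the remaining theta quotient to the form of (\ref{eg}) — the $\tau$-dependent prefactor cross-terms $e^{\mp 2\pi i z\theta_j(n)/\tau}$ cancel, and summing $\sum_j(1-2q_j)=\hat c$ leaves precisely the factor $e^{2\pi i(\hat c/2)z^2/\tau}$, i.e. $Ell(z/\tau,-1/\tau)=e^{2\pi i m z^2/\tau}Ell(z,\tau)$ with $m=\tfrac{\hat c}{2}$. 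Since $S$ and $T$ generate $SL_2(\mathbb Z)$, this establishes the full modular law.

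Finally, the weak growth condition is already in hand: the preceding theorem shows $Ell(W/G,z,\tau)$ is holomorphic on $\mathbb C\times H$, and the discussion following (\ref{t1}) shows that each summand expands as a power series in $q$ with only non-negative exponents (the potentially negative $k=0$ term of the first product being absorbed by $q^{\text{deg}\cdot n}$ via $\text{deg}\cdot n=\sum_j\theta_j(n)$), while Proposition [B2, 6.2.1] guarantees finitely many $y$-powers in each $q$-degree; this is exactly the defining growth bound for a \emph{weak} Jacobi form. I expect the $S$-transformation to be the main obstacle, not because any single step is deep but because it requires correctly identifying that the modular action interchanges the two $G$-summation labels (the coefficient of $\tau$ and the constant characteristic) and then carrying the Gaussian bookkeeping through to recover exactly the index $\tfrac{\hat c}{2}$; the half-integral-index signs from the elliptic part are a minor secondary point, to be dispatched by citing the appropriate convention for Jacobi forms of half-integer index.
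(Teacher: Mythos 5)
Your proposal is correct and follows essentially the same route as the paper: verify the four transformation laws $(z,\tau)\mapsto(z,\tau+1)$, $(z+1,\tau)$, $(z+\tau,\tau)$, $(z/\tau,-1/\tau)$ via the quasi-periodicity and modular identities of $\Theta$ combined with the same reindexings of the $G\times G$ sum (your $(n,n_1)\mapsto(n_1,-n)$ and shifts by $\text{deg}^\vee$ are exactly the paper's changes of variable), and obtain the weak condition from the non-negativity of the $q$-powers in (\ref{t1}). Your treatment is in fact slightly more careful than the paper's in two spots: you keep the $e^{i\pi/4}$ multiplier in $\Theta(\nu,\tau+1)$ (which the paper's identity (\ref{th1}) omits, harmlessly, since it cancels between the $d$ numerator and $d$ denominator factors) and you record the integer-shift invariance of each summand that legitimizes choosing representatives in $G$.
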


Weak here means that it obeys the transformation laws of the Jacobi forms, 
however at the cusp we require that only non-negative powers of $q$ appear ([EZ]). 
Also, 
when $\hat c$ is odd, 
the definition of the Jacobi form is modified to allow a character. 


\begin{proof}
The condition at the cusp holds because (\ref{t1}) has no negative powers of $q$. 
It is now enough to verify the following modular properties of 
$Ell(W/G, z, \tau)$: 
\begin{eqnarray}
Ell(W/G, z, \tau +1 ) & = & Ell(W/G, z, \tau) \label{e1}
\\
Ell(W/G, z + 1, \tau) & = & (-1)^{\hat c} Ell(W/G, z, \tau) \label{e2}
\\
Ell(W/G, z + \tau, \tau) & = & (-1)^{\hat c} e^{- i \pi \hat c (\tau + 2 z)} Ell(W/G, z, \tau) \label{e3}
\\
Ell(W/G, \frac{z}{\tau}, -\frac{1}{\tau}) & = & e^{\frac{i \pi {\hat c}  z^2}{\tau}} Ell(W/G, z, \tau) \label{e4}
\end{eqnarray}
We need the following identities of the theta function:
\begin{eqnarray}
\Theta(\nu, \tau+1) &  = & \Theta(\nu, \tau) \label{th1}
\\
\Theta(\nu +1, \tau)  & = &  - \Theta(\nu, \tau) \label{th2}
\\
\Theta(\nu + \tau, \tau) & = & - e^{-i 2 \pi \nu - i \pi \tau} \Theta(\nu, \tau)  \label{th3}
\\
\Theta(\frac{\nu}{\tau}, - \frac{1}{\tau}) & = & 
- i \sqrt{\frac{\tau}{i}} e^{ \frac{i \pi \nu^2}{\tau} } \Theta(\nu, \tau).  \label{th4}
\end{eqnarray}
(\ref{e1}) follows from (\ref{th1}) and the change of variable
$n n_1 \to n_1$ in (\ref{eg}). 
(\ref{e2}) follows from (\ref{th2}), 
$e^{-i 2 \pi \sum_j \theta_j(n) } = 1$
(because $\sum_j \theta_j(n) = \text{deg} \cdot n \in \mathbb Z$), 
and the change of variable $\text{deg}^\vee\, n_1 \to n_1$
(note that $q_j = \theta_j (\text{deg}^\vee)$ and
$\text{deg}^\vee \in G$). 
Also, 
$\sum_j q_j \in \mathbb Z$, 
hence $\hat c = d$ (mod $2$). 
(\ref{e3}) follows from (\ref{th3}), 
$\sum_j \theta_j(n_1) \in \mathbb Z$, 
and the change of variable
$\text{deg}^\vee \,\, n \to n$. 
(\ref{e4}) follows from (\ref{th4}) and change of variables
$n \to n_1^{-1}$, $n_1 \to n$. 
\end{proof}

\begin{remark}
It was shown in [BHe] that the elliptic genus $Ell(W/G)$ 
satisfy modular transformation properties 
with respect to $(z, \tau) \to (z, \tau+1)$, 
$(z, \tau) \to (\frac{z}{\tau}, -\frac{1}{\tau})$
(same as here), and
$(z, \tau) \to (z + L, \tau)$, 
$(z, \tau) \to (z+ L\tau, \tau)$, 
where $L$ is the smallest integer such that $g^L = \text{id}$ for all $g \in G$. 
Here, the modularity appears to be stronger. 
The reason is that we made the assumption $\langle J_W \rangle \subset G$, 
i.e. $\text{deg}^\vee \in N$, 
so that we can combine $q_j = \theta_j(\text{deg}^\vee)$ with $\theta_j(n)$ 
and do a change of variable. 
We also assumed that $G \subset SL_W$, 
or equivalently $\text{deg} \in M$, 
then $\sum_j \theta_j(n) \in \mathbb Z$ for all $n \in G$. 
This enables us to reduce terms
$\prod_j e^{i 2 \pi \theta_j(n)}$ to $1$. 
\end{remark}

Now, we can prove that the elliptic genera of mirror Berglund-H\"ubsch models coincide up to a sign.

\begin{theorem}
$Ell(W/G, z, \tau) = (-1)^{\hat c} Ell(W^\vee/G^\vee, z, \tau)$. 
\end{theorem}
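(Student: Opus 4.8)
The plan is to exploit the fact that the whole construction of $V_{1,1}$ is symmetric under interchanging the two dual lattices, and to extract the mirror statement from this symmetry together with the Jacobi transformation law already proved in Theorem \ref{modularity}. First I would observe that the vertex algebra attached to $W^\vee/G^\vee$ is built from the \emph{same} space $\text{Fock}_{M \oplus N}$, only with the roles of $M$ and $N$ interchanged (equivalently of $A$ and $B$, of $\Phi$ and $\Psi$, of $\text{deg}$ and $\text{deg}^\vee$, and of the generators $u_i$ and $v_j$). Under this interchange $D_{1,0}$ becomes $D_{0,1}$ and vice versa, so the total differential $D_{1,1}$ is preserved and the dual cohomology $V_{1,1}^\vee$ is identified with $V_{1,1}$. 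The one point needing care is that $\Phi$ and $\Psi$ carry different conformal weights: matching the fields as $\Phi^\vee(z)=\Psi(z)$, $\Psi^\vee(z)=\Phi(z)$ forces the re-indexing $\Phi^\vee[k]=\Psi[k+1]$, $\Psi^\vee[k]=\Phi[k-1]$ of the fermionic modes. I would check that this re-indexing carries creation modes to creation modes, sends the vacuum to the vacuum, and preserves fermion parity, so that it identifies the two super vector spaces and respects supertraces; I would also note $\hat c^\vee = \hat c$ since the pairing $\text{deg}\cdot\text{deg}^\vee$ is symmetric.

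Next I would compute how the two gradings correspond under this identification. Using the explicit descriptions of $J[0]$ and $L[0]$ in terms of fermion numbers, the negated sum of mode indices, and the ground-state charges, the interchange gives at once $J^\vee[0]=-J[0]$. For $L[0]$ the mode re-indexing contributes an extra $(\#\Psi-\#\Phi)$ to the negated index sum, and the ground-state term changes from $m\cdot n+\text{deg}\cdot n$ to $m\cdot n+\text{deg}^\vee\cdot m$; adding these two corrections gives exactly $\#\Psi-\#\Phi+\text{deg}^\vee\cdot m-\text{deg}\cdot n=J[0]$, so that
\begin{eqnarray}
J^\vee[0] = -J[0], & \qquad & L^\vee[0] = L[0]+J[0]. \nonumber
\end{eqnarray}
I expect this bookkeeping of the weight-$1$ versus weight-$0$ fermions---which is precisely the shift underlying the A-ring/B-ring identification of [B2]---to be the main obstacle, since the final relation $L^\vee=L+J$ hinges on getting it right.

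With this in hand the theorem follows formally. Writing $S(z,\tau)=\text{SuperTrace}_{V_{1,1}}\, y^{J[0]}q^{L[0]}$, the preceding identification of the elliptic genus with the supertrace gives $S=y^{\hat c/2}Ell(W/G)$, while applying it to the dual model and substituting the grading relation yields
\begin{eqnarray}
y^{\hat c/2}Ell(W^\vee/G^\vee) &=& \text{SuperTrace}_{V_{1,1}}\, y^{-J[0]}q^{L[0]+J[0]} = \text{SuperTrace}_{V_{1,1}}\, (qy^{-1})^{J[0]}q^{L[0]}, \nonumber
\end{eqnarray}
that is, $S$ evaluated at $z'=\tau-z$. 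Unwinding the prefactors gives $Ell(W^\vee/G^\vee,z,\tau)=e^{i\pi\hat c\tau}e^{-2i\pi\hat c z}\,Ell(W/G,\tau-z,\tau)$. I would then invoke the weak Jacobi transformation (\ref{e3}) with $w=-z$ to rewrite $Ell(W/G,\tau-z,\tau)=(-1)^{\hat c}e^{-i\pi\hat c(\tau-2z)}\,Ell(W/G,-z,\tau)$; all exponential phases cancel and leave $Ell(W^\vee/G^\vee,z,\tau)=(-1)^{\hat c}Ell(W/G,-z,\tau)$.

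Finally I would establish that $Ell(W/G,\cdot,\tau)$ is even, directly from the formula (\ref{eg}): the oddness $\Theta(-\nu,\tau)=-\Theta(\nu,\tau)$ turns $z\mapsto -z$ into the replacement $\theta_j(n)\mapsto-\theta_j(n)$, $\theta_j(n_1)\mapsto-\theta_j(n_1)$ (the two sign factors from numerator and denominator cancelling), which is then absorbed by the change of variables $n\mapsto -n$, $n_1\mapsto -n_1$ on $G$. This gives $Ell(W/G,-z,\tau)=Ell(W/G,z,\tau)$, and hence the asserted equality $Ell(W/G,z,\tau)=(-1)^{\hat c}Ell(W^\vee/G^\vee,z,\tau)$.
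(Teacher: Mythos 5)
Your proposal follows essentially the same route as the paper: exploit the $M \leftrightarrow N$ symmetry of $\text{Fock}_{M\oplus N}$ and $D_{1,1}$, identify the dual gradings as $J^\vee[0]=-J[0]$, $L^\vee[0]=L[0]+J[0]$, rewrite the dual supertrace as $Ell(W/G)$ evaluated at $\tau-z$, and conclude via the elliptic transformation law. Your additional verifications --- the fermionic mode re-indexing that produces $L^\vee=L+J$, and the evenness of $Ell$ in $z$, which the paper uses implicitly when it invokes the transformation at $-z+\tau$ --- are correct and merely fill in details the paper leaves unstated.
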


\begin{proof}
The Fock space $\text{Fock}_{M\oplus N}$ 
and the differential $D_{1, 1}$ 
are both symmetric with respect to the 
switching of $M$ and $N$. 
The discrepancy of the cocycle (\ref{coc}) and its counterpart with the role of $M$ and $N$ switched
can be resolved by multiplying $|m, n\rangle$ by $(-1)^{m\cdot n}$. 
However, 
to obtain the elliptic genus of the dual theory $W^\vee/G^\vee$ 
from the double-graded superdimension of $V_{1, 1}$, 
we need to consider a different bi-grading  
than the $(J[0], L[0])$ in (\ref{jfield}) and (\ref{lfield}). 
Instead, consider 
\begin{eqnarray}
J^*(z) = - J(z), \qquad 
L^*(z) = L(z) - \partial_z J(z), 
\end{eqnarray}
or 
\begin{eqnarray}
J^*[0] = - J[0], \qquad 
L^*[0] = L[0] + J[0].
\end{eqnarray}
Then by Theorem \ref{modularity}, we have
\begin{eqnarray}
Ell(W^\vee/G^\vee, y, q) 
& = & y^{- \frac{\hat c}{2}} 
\text{SuperTrace}_{V_{1, 1}} y^{J^*[0]} q^{L^*[0]} 
\nonumber \\
& = & y^{- \frac{\hat c}{2}} 
\text{SuperTrace}_{V_{1, 1}} (y^{-1} q)^{J[0]} q^{L[0]}
\nonumber \\
& = & y^{-\hat c} q^{\frac{\hat c}{2}} 
Ell(W/G, y^{-1} q, q).
\nonumber
\end{eqnarray}
It remains to use the following transformation property of $Ell$: 
\begin{eqnarray}
Ell(W/G, -z + \tau, \tau) & = & 
(-1)^{\hat c} e^{-i \pi \hat c (\tau - 2 z)}
Ell(W/G, z, \tau).
\nonumber
\end{eqnarray}
\end{proof}











\begin{thebibliography}{99999}\frenchspacing

\bibitem[B]{B}
V. V. Batyrev, 
Dual polyhedra and mirror symmetry for Calabi-Yau hypersurfaces in toric varieties, 
J. Alg. Geom. 3, 493-535 (1994). 

\bibitem[BB]{BB}
V.V. Batyrev, L. A. Borisov, 
Dual cones and mirror symmetry for generalized Calabi-Yau manifolds, 
Mirror Symmetry II, 1995. 
B. Greene and S.-T. Yau, eds., 
Cambridge: International Press, 1997, 65-80. 

\bibitem[B1]{B1} 
L. A. Borisov, 
Vertex algebras and mirror symmetry, 
Commun. Math. Phys. 215, 517-557 (2001). 

\bibitem[B2]{B2} 
L. A. Borisov, 
Berglund-H\"ubsch mirror symmetry via vertex algebras, 
preprint arXiv: 1007.2633. 

\bibitem[BH]{BH}
P. Berglund, T. H\"ubsch, 
A generalized construction of mirror manifolds, 
Nucl. Phys. B 393 (1993), 377-391. 

\bibitem[BHe]{BHe}
P. Berglund, M. Henningson, 
Landau-Ginzburg orbifolds, mirror symmetry and the elliptic genus, 
Nucl. Phys. B 433 (1995), 311-332. 

\bibitem[BL]{BL} 
L. A. Borisov, A. Libgober, 
Elliptic genera of toric varieties and applications to mirror symmetry, 
Invent. Math. 140, 453-485 (2000). 

\bibitem[CR]{CR}
A. Chiodo, Y. Ruan, 
LG/CY correspondence: the state space isomorphism, 
preprint arXiv: 0908.0908. 

\bibitem[EZ]{EZ}
M. Eichler, D. Zagier, 
The theory of Jacobi forms, 
Progress in Mathematics, 55, Birkh\"auser Boston, Inc., Boston, Mass., 1985. 

\bibitem[K]{K}
M. Krawitz, 
FJRW-rings and Landau-Ginzburg mirror symmetry, 
preprint arXiv: 0906.0796.

\bibitem[KS]{KS}
M. Kreuzer, H. Skarke, 
On the classification of quasihomogeneous functions, 
Commun. Math. Phys. 150, 137-147 (1992). 

\end{thebibliography}
\end{document}